\newcommand{\ignore}[1]{}
\newtheorem{dummy}{Dummy}
\newtheorem{lemma}[dummy]{Lemma}
\newtheorem{theorem}[dummy]{Theorem}
\newtheorem{proposition}[dummy]{Proposition}
\newtheorem{corollary}[dummy]{Corollary}
\theoremstyle{definition}
\newtheorem{remark}[dummy]{Remark}
\subjclass[2010]{Primary: 17A35}
\keywords{Nonassociative algebras, first Tits construction, Jordan algebras, generalized cubic algebras.}
\address{Department of Mathematics and Statistics\\
University of Ottawa\\
585 King Edward Avenue
\\ Ottawa, ON K1N 7N5
Canada \\
School of Mathematical Sciences\\
University of Nottingham\\
University Park\\
Nottingham NG7 2RD\\
United Kingdom
}
\author{T. Moran}
\email{tmora083@uottawa.ca}
\author{S. Pumpl\"un}
\email{susanne.pumpluen@nottingham.ac.uk}
\begin{document}

\title[A generalization of the first Tits construction]
{A generalization of the first Tits construction}
\maketitle

\begin{abstract}
Let $F$ be a field of characteristic not 2 or 3. The first Tits construction is a well-known tripling process to construct separable cubic Jordan algebras, especially Albert algebras.
We generalize the first Tits construction by choosing the scalar employed in the tripling process outside of the base field.
This yields a new family of nonassociative unital algebras  which carry a cubic map, and maps that can be viewed as generalized adjoint and
generalized  trace maps. These maps display properties often similar to the ones in the classical setup. In particular, the cubic norm map permits some kind of weak Jordan composition law.
\end{abstract}

\section*{Introduction}

Let $F$ be a field of characteristic not 2 or 3. Separable cubic Jordan algebras over $F$ play an important role in Jordan
theory (where separable means that their trace  defines a nondegenerate bilinear form). It is well known that every separable cubic Jordan algebra  can be obtained by either a first or a second Tits construction \cite[IX, Section 39]{KMRT}. In particular, exceptional simple Jordan algebras, also called Albert algebras, are separable cubic Jordan algebras.
The role of Albert algebras in the structure theory of Jordan algebras is similar to the role of octonion algebras in the structure theory of alternative algebras. Moreover, their automorphism group is an exceptional algebraic group of type $F_4$, their cubic norms have isometry groups of type $E_6$.

In this paper we canonically
generalize the first Tits construction $J(A,\mu)$. The first Tits construction starts with
 a separable associative algebra $A$ of degree three, and uses a scalar $\mu\in F^\times$ in its definition. Our construction also starts with
  $A$, employs the same algebra multiplication as used for the classical first Tits construction, but now allows also $\mu\in A^\times$.

  We obtain a new class of nonassociative unital algebras we again denote by $J(A,\mu)$. They carry a cubic map $N:J(A,\mu)\to A$ that generalizes the classical norm, a map  $T: J(A,\mu) \rightarrow F$ that generalizes the classical trace and a map  $\sharp :J(A,\mu) \rightarrow J(A,\mu)$ that generalizes the classical adjoint of a Jordan algebra.
 Starting with a cubic \'etale algebra $E$,  the algebras obtained this way can be viewed as generalizations of  special nine-dimensiomal Jordan algebras.
 Starting with a
 central simple algebra $A$ of degree three, the algebras obtained this way can be viewed as generalizations of Albert algebras.

Cubic Jordan algebras carry a cubic norm that satisfies some Jordan composition law involving the $U$-operator. Curiously, the cubic map $N:J(A,\mu)\to A$ of our generalized construction still allows some sort of generalized weak Jordan composition law, and some  of the known identities of cubic Jordan algebras involving a generalized trace map and adjoint can be at least partially recovered.

We point out that there already exist canonical nonassociative generalization of cyclic algebras of degree three, involving skew polynomials: the nonassociative cyclic algebras $(K/F,\sigma,\mu)$, where $K/F$ is a cubic separable field extension or a $C_3$-Galois algebra, and $\mu\in K\setminus F$, were first
studied  over finite fields \cite{S}, and then later over arbitrary base fields and rings \cite{St1, St2, BP, Pu0} and applied in space-time block coding \cite{Pu-U, S-Pu-O}. Their ``norm maps'' are isometric to the ``norm maps'' we now obtain of the generalized Tits construction $J(K,\mu)$, and reflect some of the algebra's properties. We show that these algebras are not related, however.

Some obvious questions like when and if the algebras obtained through a generalized first Tits construction are division algebras seem to be very difficult to answer. We will not address these here and only discuss some straightforward implications.

The contents of the paper are as follows:
After introducing the terminology in  Section \ref{sec:preliminaries} and reviewing the classical first Tits construction,  we generalize the classical construction in Section \ref{sec:generalized}  and obtain unital nonassociative algebras with maps that satisfy some of the same identities we know from the classical setup.  We investigate in which special cases several classical identities carry over in Section \ref{sec:identities}.

In Section \ref{sec:example}, we compare the algebras obtained from a generalized first Tits construction starting with a cyclic field extension with the algebras $(K/F,\sigma,\mu)^+$, where $(K/F,\sigma,\mu)$ is a nonassociative cyclic algebra over $F$ of degree three.
If $\mu\in F^\times$ it is well known that these algebras are isomorphic. For $\mu\in K\setminus F$, they are not isomorphic, but their norms are isometric.

This construction was briefly investigated for the first time in Andrew Steele's PhD thesis \cite{St2}. We improved and corrected most of his results, and added many new ones.

%
\section{Preliminaries} \label{sec:preliminaries}
%

\subsection{Nonassociative algebras}
Throughout the paper, let $F$ be a field of characteristic not 2 or 3. An $F$-vector space
$A$ is an
\emph{algebra} over $F$ if there exists an $F$-bilinear map $A\times
A\to A$, $(x,y) \mapsto x \cdot y$, denoted simply by juxtaposition
$xy$, the  \emph{multiplication} of $A$. An algebra $A$ is
\emph{unital} if there is an element in $A$, denoted by 1, such that
$1x=x1=x$ for all $x\in A$. We will only consider unital finite-dimensional algebras.

 A nonassociative algebra $A\not=0$ is called a {\it division algebra} if for any $a\in A$, $a\not=0$,
the left multiplication  with $a$, $L_a(x)=ax$,  and the right multiplication with $a$, $R_a(x)=xa$, are bijective.
$A$ is a division algebra if and only if $A$ has no zero divisors. 
The associativity of $A$ is measured by the {\it associator} $[x, y, z] = (xy) z - x (yz)$, and the subalgebras
 ${\rm Nuc}_l(A) = \{ x \in A \, \vert \, [x, A, A]  = 0 \}$,
 ${\rm Nuc}_m(A) = \{ x \in A \, \vert \, [A, x, A]  = 0 \}$, and
 ${\rm Nuc}_r(A) = \{ x \in A \, \vert \, [A,A, x]  = 0 \}$  are called the {\it left nucleus},  {\it middle nucleus} and {\it right nucleus} of $A$.
Their intersection ${\rm Nuc}(A) = \{ x \in A \, \vert \, [x, A, A] = [A, x, A] = [A,A, x] = 0 \}$ is
 the {\it nucleus} of $A$.
The \emph{center} of $A$ is defined as $C(A)=\{ x \in A \, \vert \, xy=yx \text{ for all } y\in A\}\cap {\rm Nuc}(A)$ \cite{Sch}.
All algebras we consider will be unital.

 A nonassociative unital algebra $J$ is called a \emph{cubic Jordan algebra} over $F$ if $J$ is a Jordan algebra, i.e.
$xy = yx$ and $(x^{2}y)x = x^{2}(yx)$ for all $ x,y \in J $, and if its generic minimal polynomial has degree three.
\ignore{
Every cubic form with adjoint and base point $(N,\sharp,1)$ on a locally free $R$-module $W$ of finite rank
defines a unital Jordan algebra structure $J(N,\sharp,1)$
on $W$ via
$$U_x(y)=T(x,y)x-x^\sharp\times y$$
for all  $x,y\in W$. More precisely, let
$U_{x,y} = U_{x+y} - U_x - U_y$ be the linearization of $U_x$, then
the algebra multiplication  that makes $W$ into a unital algebra is given by $x \circ y=U_{x,y}1$.
}
An associative algebra $A$ over \(F\), together with the new multiplication
$x \cdot y = \frac{1}{2}(xy+yx)$, is a Jordan algebra over \(F\) denoted by  $ A^{+} $.
A Jordan algebra \(J\) is called \emph{special}, if it is a subalgebra of $ A^{+} $ for some associative algebra \(A\) over \(F\), otherwise \(J\) is \emph{exceptional}. An exceptional Jordan algebra is called an \emph{Albert algebra}.

 The following easy observation  is included for the sake of the reader:

\begin{lemma} \label{A+ division algebra then A}
Let \(A\) be an associative algebra over \(F\) such that $ A^{+} $ is a division algebra. Then \(A\) is a divison algebra.
\end{lemma}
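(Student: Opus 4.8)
The plan is to argue by contraposition, using the characterization recalled in the Preliminaries that a finite-dimensional (possibly nonassociative) algebra is a division algebra precisely when it has no zero divisors. Applying this criterion to \(A\), it suffices to show: if \(A\) has a zero divisor, then so does \(A^{+}\), contradicting the hypothesis that \(A^{+}\) is a division algebra.

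So suppose there are nonzero \(a,b\in A\) with \(ab=0\). The key observation is that \(ba\) then squares to zero, since by associativity of \(A\) we have \((ba)^{2}=b(ab)a=0\). Now distinguish two cases. If \(ba=0\) as well, then in \(A^{+}\) we get \(a\cdot b=\tfrac{1}{2}(ab+ba)=0\) with \(a,b\) both nonzero, so \(A^{+}\) has a zero divisor. If \(ba\neq 0\), set \(c:=ba\); then \(c\neq 0\) and \(c^{2}=0\), hence \(c\cdot c=\tfrac{1}{2}(c^{2}+c^{2})=c^{2}=0\) in \(A^{+}\), again a zero divisor. In either case \(A^{+}\) is not a division algebra, a contradiction; therefore \(A\) is a division algebra.

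There is essentially no obstacle in this argument. The only point requiring a little care is the invocation of the equivalence ``division algebra \(\iff\) no zero divisors'' in the finite-dimensional setting — used once to pass from ``\(A\) is not a division algebra'' to the existence of a zero divisor in \(A\), and once (in the contrapositive direction) to conclude that the zero divisor produced in \(A^{+}\) means \(A^{+}\) is not a division algebra. Both instances are exactly the fact recorded in Section~\ref{sec:preliminaries}, and the associativity of \(A\) is what makes the computation \((ba)^{2}=b(ab)a\) legitimate.
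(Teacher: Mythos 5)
Your proof is correct and is essentially the paper's own argument: both hinge on the computation $(ba)^2=b(ab)a=0$ and on the equivalence between being a division algebra and having no zero divisors, the only difference being that you organize the conclusion as a case split while the paper deduces $ba=0$ and then $a\cdot b=0$ in sequence. No gap.
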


\begin{proof}
Suppose that $ xy = 0 $ for some $ x,y \in A $. Then $ (yx) \cdot (yx) = y(xy)x = 0 $, and since $ A^{+} $ is a division algebra, we obtain $ yx = 0 $. This implies that $x \cdot y = \frac{1}{2}(xy+yx) = 0.$
Using again  that $ A^{+}$ is a division algebra, we deduce that $ x = 0 $ or $ y = 0 $.
\end{proof}

A \emph{nonassociative cyclic algebra} $(K/F,\sigma,c)$ of degree $m$ over $F$ is an $m$-dimensional $K$-vector space
$(K/F,\sigma,c)=K \oplus Kz \oplus K z^2 \oplus\dots \oplus K z^{m-1},$
with multiplication given by the relations
$\label{eq:rule}
z^m=c,~zl=\sigma(l)z$
for all $l\in K$. $(K/F,\sigma,c)$ is a unital algebra with center $F$ and associative if and only if $c\in F^\times$. $(K/F,\sigma,c)$  is a division algebra for all $c\in F^\times$, such that
  $c^s\not\in N_{K/F}(K^\times)$ for all $s$ which are prime divisors of $m$, $1\leq s\leq m-1$.
  If $c\in K\setminus F$ then $(K/F,\sigma,c)$, then $(K/F,\sigma,c)$ is a division algebra for all $c\in K\setminus F$, if $m$ is prime, and for all $c\in K\setminus F$ such that 1, $c,\dots, c^{m-1}$ are linearly independent over $F$  \cite{St1}.

\subsection{Cubic maps}
Let $V$ and $W$ be two finite-dimensional vector spaces over $F$.
A trilinear map $M : V \times  V \times V\rightarrow W$
 is called {\it symmetric} if $M(x,y,z)$ is invariant under all permutations of its variables.
 A map $M:V\rightarrow  W$ is called a {\it cubic map} over $F$, if $M(a x)=a^3 M(x)$ for all $a\in F$, $x\in V$, and if the associated map $M : V \times  V \times V\rightarrow W$ defined by
 $$M(x,y,z)=\frac{1}{6}(M(x+y+z)-M(x+y)-M(x+z)-M(y+z)+ M(x)+M(y)+M(z))$$
  is a (symmetric) $F$-trilinear map. We canonically identify symmetric trilinear maps $ M:V \times  V\times V\to W$ with the
corresponding cubic maps $M:V\rightarrow W$.

A cubic map $M:V\rightarrow F$ is called a {\it cubic form}
and a trilinear map $M: V\times V\times V\rightarrow F$
a {\it trilinear form}  over $F$.
A cubic map  is called {\it nondegenerate} if $v = 0$ is the only vector such
that $M (v, v_{2}, , v_3) = 0$ for all $v_i \in V$.  A cubic map $M$ is called {\it anisotropic} if
$M(x)=0$ implies that $x=0$, else {\it isotropic}.
For a nonassociative algebra $A$ over $F$ together with a nondegenerate cubic form $M:A\rightarrow F$,  $M$ is called
\emph{multiplicative}, if $M(vw)=M(v)M(w)$ for all $v, w\in A$.

\subsection{Algebras of degree three} \label{Degree 3 algebras}(cf. for instance \cite{KMRT}, \cite[Chapter C.4]{McC})

Let \(A\) be a unital separable associative algebra over \(F\) with  norm  $ N_{A} : A \rightarrow F $. 
Let $ x,y \in A $ and let \(Z\) be an indeterminate. The linearisation
$
N_{A}(x+Zy) = N_{A}(x) + ZN_{A}(x;y) + Z^{2}N_{A}(y;x) + Z^{3}N_{A}(y)
$
of $ N_{A} $, i.e. the coefficient of \(Z\) in the above expansion,
 is quadratic in \(x\) and linear in \(y\), and is denoted by $ N_{A}(x;y) $.
 Indeed, we have
\begin{equation*}
N_{A}(x+Zx) = N_{A}((1+Z)x) = (1+Z)^{3}N_{A}(x) = (1+3Z+3Z^{2}+Z^{3})N_{A}(x),
\end{equation*}
so $ N_{A}(1;1) = 3N_{A}(1) = 3 $.
Linearize $ N_{A}(x;y) $  to obtain a symmetric trilinear map $N_A:A\times A\times A \rightarrow F$,
$N_{A}(x,y,z) = N_{A}(x+z;y) - N_{A}(x;y) - N_{A}(z;y).$
We define
\begin{align*}
T_{A}(x) &= N_{A}(1;x), \\
T_{A}(x,y) &= T_{A}(x)T_{A}(y) - N_{A}(1,x,y), \\
S_{A}(x) &= N_{A}(x;1), \\
x^{\sharp } &= x^{2} - T_{A}(x)x+S_{A}(x)1,
\end{align*}
for all $ x,y \in A $. We call $ x^{\sharp } $ the \emph{adjoint} of \(x\), and define the \emph{sharp map} $ \sharp  :A\times A \rightarrow A$,
 $x\sharp y = (x+y)^{\sharp }-x^{\sharp }-y^{\sharp }$ as the linearisation of the adjoint.
We observe that $T_{A}(1) = S_{A}(1) = 3$. Since the trilinear map $ N_{A}(x,y,z) $ is symmetric,
\begin{equation} \label{Tr = Tr}
T_{A}(x,y) = T_{A}(y,x)
\end{equation}
for  all $ x,y \in A $.

 The associative algebra \(A\) is called of \emph{degree three} or a \emph{cubic algebra}, if the following three identities are satisfied  for all $ x,y \in A $:
\begin{align}
& x^{3}-T_{A}(x)x^{2}+S_{A}(x)x-N_{A}(x)1 = 0 \mbox{ (\emph{degree} } 3 \mbox{ \emph{identity})}, \label{degree three  identity} \\
&T_{A}(x^{\sharp },y) = N_{A}(x;y) \mbox{ \emph{(trace-sharp formula})}, \label{Trace-sharp formula} \\
& T_{A}(x,y) = T_{A}(xy) \mbox{ \emph{(trace-product formula).}} \label{Trace-product formula}
\end{align}

 For the rest of Section \ref{Degree 3 algebras}, we assume that \(A\) is a separable algebra of degree three over \(F\) with  cubic norm  $ N_{A}: A \rightarrow F $. Note that  (\ref{degree three  identity}) is equivalent to the condition that
\begin{equation} \label{degree three  identity equivalent}
xx^{\sharp } = x^{\sharp }x = N_{A}(x)1,
\end{equation}
and combining (\ref{Tr = Tr}) with (\ref{Trace-product formula}) gives
\begin{equation} \label{Tr(xy) = Tr(yx)}
T_{A}(xy) = T_{A}(yx).
\end{equation}
An element $ x \in A $ is invertible if and only if $ N_{A}(x) \neq 0 $.
The inverse of $ x \in A $ is $ x^{-1} = N_{A}(x)^{-1}x^{\sharp }$.
It can be shown that
\begin{equation} \label{xy hash = y hash x hash}
(xy)^{\sharp } = y^{\sharp }x^{\sharp }
\end{equation}
for all $ x,y \in A $. Notice that
\begin{equation} \label{use it twice}
T_{A}(x^{\sharp }) = T_{A}(x^{\sharp },1) = N_{A}(x;1) = S_{A}(x),
\end{equation}
 using (\ref{Trace-product formula}) and  (\ref{Trace-sharp formula}). We also have $ S_{A}(x) = T_{A}(x^{\sharp }) = T_{A}(x^{2})-T_{A}(x)^{2}+3S_{A}(x) $, so
\begin{equation} \label{S(x) = Tr - Tr}
2S_{A}(x) = T_{A}(x)^{2}-T_{A}(x^{2}).
\end{equation}
A straightforward calculation shows that
\begin{equation*} \label{sharp alternative}
x\sharp  y = 2(x \cdot y) -T_{A}(x)y -T_{A}(y)x+(T_{A}(x)T_{A}(y)-T_{A}(x\cdot y))1.
\end{equation*}
for all $ x,y \in A $.
In particular,
\[x \cdot y = \frac{1}{2}(xy+yx)= \frac{1}{2}(x \sharp y + T_A(x)y + T_A(y)x - (T_A(x)T_A(y)- T_A(x,y))1)\]
for all $x,y \in A$
and by employing (\ref{degree three  identity equivalent}) and the adjoint identity in \(A\) we see that the norm $N_A$ satisfies the relation
\begin{equation}\label{normsharp} 
N_A(x^\sharp) = N_A(x)^2.
\end{equation}

 \(A^+\) satisfies the \emph{adjoint identity}
$(x^{\sharp })^{\sharp } = N_{A}(x)x$ \label{adjoint identity lemma}
for all $ x \in A $.
By (\ref{normsharp}) we have
$N_{A}(x^{\sharp })1 = x^{\sharp }(x^{\sharp })^{\sharp } = x^{\sharp }N_{A}(x)x = N_{A}(x)^{2}1.$
For  $ x,y \in A $, we define the operators $ U_{x}: A \rightarrow A $,
$U_{x}(y) = T_{A}(x,y)x-x^{\sharp }\sharp  y$
and  $ U_{x,y}: A \rightarrow A $,
$U_{x,y}(z) = U_{x+y}(z)-U_{x}(z)-U_{y}(z).$ Then  we have $ x \cdot y = \frac{1}{2}U_{x,y}(1) $ for all $ x,y \in A $
and
\ignore{
\begin{proposition}
For each $ x,y \in A $, we have $ x \cdot y = \frac{1}{2}U_{x,y}(1) $.
\end{proposition}

\begin{proof}
By Lemma \ref{sharp alternative}, $ x^{\sharp }\sharp  1 = T_{A}(x^{\sharp })1-x^{\sharp } $. This gives that
\begin{equation*}
U_{x}(1) = T_{A}(x,1)x-x^{\sharp }\sharp  1 = T_{A}(x)x-T_{A}(x^{\sharp })1+x^{\sharp },
\end{equation*}
where in the second equality we have used the trace-product formula (see (\ref{Trace-product formula})). So
\begin{align}
U_{x,y}(1) &= U_{x+y}(1)-U_{x}(1)-U_{y}(1) \nonumber \\ &= T_{A}(x+y)(x+y)-T_{A}((x+y)^{\sharp })1+(x+y)^{\sharp }-T_{A}(x)x+T_{A}(x^{\sharp })1 -x^{\sharp } \nonumber \\ &\hspace{0.4cm}-T_{A}(y)y+T_{A}(y^{\sharp })1-y^{\sharp } \nonumber \\ &= T_{A}(x)y+T_{A}(y)x+x\sharp  y - T_{A}(x\sharp  y)1. \label{equation in this prop}
\end{align}
Using Lemma \ref{sharp alternative}, the linearity of $ T_{A} $ and (\ref{Tr(xy) = Tr(yx)}), we obtain after some simplification that (\ref{equation in this prop}) is equal to $ 2(x\cdot y) $. Hence $ x \cdot y = \frac{1}{2}U_{x,y}(1) $.
\end{proof}
}
\begin{equation} \label{McCrimmon xyx equation}
xyx = T_{A}(x,y)x-x^{\sharp }\sharp  y,
\end{equation}
  hence $U_{x}(y) = xyx$
for all $ x,y \in A^\times $.

Define
$x \times y = \frac{1}{2}(x \sharp  y),$ and $ \bar{x} = \frac{1}{2}(T_{A}(x)1-x)$
for $x,y\in A$.
(Note that some literature does not include the factor  $\frac{1}{2} $ in the definition of $\times$, e.g. \cite{Thakur}.)
By Lemma \ref{sharp alternative},  we then have
$$x \times y = x\cdot y -\frac{1}{2}T_{A}(x)y - \frac{1}{2}T_{A}(y)x+ \frac{1}{2}(T_{A}(x)T_{A}(y)-T_{A}(x\cdot{y}))1$$
for all $x,y \in A $, and hence
\begin{equation} \label{axa = a adj}
x \times x = x^{2}-T_{A}(x)x+\frac{1}{2}(T_{A}(x)^{2}-T_{A}(x^{2})) = x^{\sharp },
\end{equation}
 using (\ref{S(x) = Tr - Tr}).

\subsection{The first Tits construction} \label{The first Tits construction}

Let $A$ be an separable
associative algebra of degree three  over $F$ with norm $N_A$, trace $T_A$ and adjoint map $\sharp$. Let $\mu \in F^{\times}$ and
define the $F$-vector space $J= J(A, \mu) = A_0 \oplus A_1 \oplus A_2$, where  $A_i=A$ for $i=0,1,2$.
Then $J(A,\mu)$ together with the multiplication
$$
(x_{0},x_{1},x_{2})(y_{0},y_{1},y_{2}) = (x_{0}\cdot y_{0}+\overline{x_{1}y_{2}}+\overline{y_{1}x_{2}}, \overline{x_{0}}y_{1}+\overline{y_{0}}x_{1}+\mu^{-1}(x_{2}\times y_{2}), x_{2}\overline{y_{0}}+y_{2}\overline{x_{0}}+\mu(x_{1}\times y_{1}))
$$
becomes a separable cubic Jordan algebra over $F$.  $J(A,\mu)$ is called a \emph{first Tits construction}.
 $A^+$  is a subalgebra of $J(A,\mu)$ by canonically identifying it with $A_0$. If $A$ is a cubic etale algebra, then $J(A,\mu)\cong D^+$ for
 with $D$ an associative cyclic algebra $D$ of degree three.
 If $A$ is a central simple algebra of degree three then $J(A,\mu)$ is an Albert algebra.

We define the \emph{cubic norm form} $ N: J(A,\mu) \rightarrow F $, the \emph{trace} $ T: J(A,\mu) \rightarrow F $, and the quadratic map $\sharp:J(A, \mu )\rightarrow  J(A, \mu )$  (the \emph{adjoint}) by
\begin{align*}
N((x_0, x_1, x_2))& = N_A(x_0) +  \mu  N_A(x_1) +  \mu^{-1} N_A(x_2) - T_{A}(x_0x_1x_2) \label{Cubic norm form N}\\
T((x_0, x_1, x_2)) &= T_{A}(x_{0}), \\
(x_0, x_1, x_2)^{\sharp } &= (x_{0}^{\sharp }-x_{1}x_{2},\mu^{-1}x_{2}^{\sharp }-x_{0}x_{1},\mu x_{1}^{\sharp }-x_{2}x_{0}).
\end{align*}

The \emph{intermediate quadratic form} $S:J(A,\mu) \rightarrow F$, $S(x_0) = N(x;1)$, linearizes to a
 map $S: J(A,\mu) \times J(A,\mu)\rightarrow F$.
The \emph{sharp map} $ \sharp  :J(A,\mu)\times J(A,\mu)\rightarrow J(A,\mu)$ is the linearisation
$x\sharp  y = (x+y)^{\sharp }-x^{\sharp }-y^{\sharp }$
of the adjoint. For every $ x = (x_{0},x_{1},x_{2}) \in J(A,\mu) $, we have
$ x\sharp  1 = T(x)1-x  \label{x sharp 1 in F}$
and
\begin{equation*}
x\sharp y = (x_{0}\sharp y_{0}-x_{1}y_{2}-y_{1}x_{2}, \mu^{-1}(x_{2}\sharp y_{2})-x_{0}y_{1}-y_{0}x_{1},\mu(x_{1}\sharp y_{1})-x_{2}y_{0}-y_{2}x_{0})
\end{equation*}
for all $ x = (x_{0},x_{1},x_{2}), y = (y_{0},y_{1},y_{2}) \in J(A,\mu) $.
We  define the \emph{trace symmetric bilinear form} $T: J(A,\mu)\times J(A,\mu)\rightarrow F $,
$T(x,y) = T_{A}(x_{0}y_{0}) + T_{A}(x_{1}y_{2}) + T_{A}(x_{2}y_{1}).$
Then for all $ x,y \in J(A,\mu) $, we have
\begin{equation}
T(x,y) = T(xy) . \label{bilinear trace equals}
\end{equation}

\begin{remark} $(N,\sharp,1)$ is a cubic form with adjoint and base point $(1,0,0)$ on $J(A, \mu )$ which makes $J(A, \mu )$ into a cubic Jordan algebra $J(N,\sharp,1)$.
\end{remark}

%
\section{The generalised first Tits construction}\label{sec:generalized}
%

Let $A$ be a separable
associative algebra of degree three over $F$ with norm $N_A$, trace $T_A$ and adjoint map $\sharp $.

\subsection{The algebra $J(A, \mu)$}
We now  generalise the first Tits construction by choosing the scalar $\mu\in A^\times$.
Then the $F$-vector space $J(A, \mu) = A_0 \oplus A_1 \oplus A_2$, where again $A_i=A$ for $i=0,1,2$,
 becomes a unital nonassociative algebra over $F$ together with the multiplication given by
$$
(x_{0},x_{1},x_{2})(y_{0},y_{1},y_{2}) = (x_{0}\cdot y_{0}+\overline{x_{1}y_{2}}+\overline{y_{1}x_{2}}, \overline{x_{0}}y_{1}+\overline{y_{0}}x_{1}+\mu^{-1}(x_{2}\times y_{2}), x_{2}\overline{y_{0}}+y_{2}\overline{x_{0}}+\mu(x_{1}\times y_{1})).
$$
The algebra $J(A,\mu)$ is called a \emph{generalized first Tits construction}. The special Jordan algebra $A^+$  is a subalgebra of $J(A,\mu)$ by canonically identifying it with $A_0$. If $\mu  \in F^{\times} $, then $ J(A,\mu) $ is the first Tits construction from Section \ref{The first Tits construction}.

 We define a \emph{(generalized) cubic norm map} $ N: J(A,\mu) \rightarrow A $, a \emph{(generalized) trace} $ T: J(A,\mu) \rightarrow F $, and a quadratic map  $\sharp:J(A, \mu )\rightarrow  J(A, \mu )$  via
\begin{align}
N((x_0, x_1, x_2))& = N_A(x_0) +  \mu  N_A(x_1) +  \mu^{-1} N_A(x_2) - T_{A}(x_0x_1x_2) \label{Cubic norm form N}\\
T((x_0, x_1, x_2)) &= T_{A}(x_{0}), \\
(x_0, x_1, x_2)^{\sharp } &= (x_{0}^{\sharp }-x_{1}x_{2},\mu^{-1}x_{2}^{\sharp }-x_{0}x_{1},\mu x_{1}^{\sharp }-x_{2}x_{0}).
\end{align}
Put $ \sharp  : J(A,\mu)\times J(A,\mu) \rightarrow J(A,\mu)$,
$x\sharp  y = (x+y)^{\sharp }-x^{\sharp }-y^{\sharp }$, then it can be verified by a direct computation that
\begin{equation*}
x\sharp y = (x_{0}\sharp y_{0}-x_{1}y_{2}-y_{1}x_{2}, \mu^{-1}(x_{2}\sharp y_{2})-x_{0}y_{1}-y_{0}x_{1},\mu(x_{1}\sharp y_{1})-x_{2}y_{0}-y_{2}x_{0})
\end{equation*}
for all $ x = (x_{0},x_{1},x_{2}), y = (y_{0},y_{1},y_{2}) \in J(A,\mu) $.
We also define a  symmetric $F$-bilinear form  $ T: J(A,\mu)\times J(A,\mu)\rightarrow  F$ via
$
T(x,y) = T_{A}(x_{0}y_{0}) + T_{A}(x_{1}y_{2}) + T_{A}(x_{2}y_{1}).
$

The  quadratic form $S_A:A \rightarrow F$, $S_A(x_0) = N_A(x;1)$, linearizes to $S_A: A \times A \rightarrow F$, and we have $S_A(x_0) = T_A(x_0^\sharp)$ for all $x_0 \in A$.
 We extend $S_A$ to $J(A,\mu)$ by defining the quadratic map $S:J(A,a)\rightarrow A$, $S(x) = N(x;1)$.
As in the classical case we obtain:

\begin{theorem} \label{id1}
(i) \cite[Proposition 5.2.2]{St2} For all $ x \in J(A,\mu) $, we have $S(x)= T(x^\sharp)$ and the linearization $S: J(A,\mu) \times J(A,\mu) \rightarrow A$ satisfies
\[S(x,y) = T(x)T(y) - T(x,y)\]
for all $y \in J(A,\mu)$.
\\
(ii) \cite[Lemma 5.2.3]{St2} For all $ x,y \in J(A,\mu) $, we have $ T(x,y) = T(xy) $.
\\ (iii) \cite[Lemma 5.2.3]{St2} For all $ x \in J(A,\mu) $, we have $ x\sharp  1 = T(x)1-x $.  
\end{theorem}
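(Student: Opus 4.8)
The plan is to verify all three parts by direct computation: substitute the explicit formulas for $N$, $T$, $\sharp$ and the product on $J(A,\mu)$, then reduce everything to identities that already hold in the separable cubic algebra $A$. Neither the nonassociativity of $J(A,\mu)$ nor the fact that $\mu$ lies in $A^{\times}$ rather than in $F^{\times}$ will intervene. For (iii) I would take the displayed formula for $x\sharp y$ and set $y=1=(1,0,0)$: the last two components of $x\sharp 1$ collapse to $-x_{1}$ and $-x_{2}$ because $z\sharp 0=0$ and $x_{i}\cdot 0=0\cdot x_{i}=0$, while the first component is $x_{0}\sharp 1=T_{A}(x_{0})1-x_{0}$ by the corresponding identity in $A$ (which comes from $x\sharp y=2(x\cdot y)-T_{A}(x)y-T_{A}(y)x+(T_{A}(x)T_{A}(y)-T_{A}(x\cdot y))1$ with $y=1$). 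Reassembling gives $x\sharp 1=T_{A}(x_{0})(1,0,0)-(x_{0},x_{1},x_{2})=T(x)1-x$.

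For (ii) I would expand $T(xy)=T_{A}\big((xy)_{0}\big)=T_{A}\big(x_{0}\cdot y_{0}+\overline{x_{1}y_{2}}+\overline{y_{1}x_{2}}\big)$. From $\bar{z}=\tfrac{1}{2}(T_{A}(z)1-z)$ together with $T_{A}(1)=3$ one obtains $T_{A}(\bar{z})=T_{A}(z)$, and from $x_{0}\cdot y_{0}=\tfrac{1}{2}(x_{0}y_{0}+y_{0}x_{0})$ together with $(\ref{Tr(xy) = Tr(yx)})$ one obtains $T_{A}(x_{0}\cdot y_{0})=T_{A}(x_{0}y_{0})$. Hence $T(xy)=T_{A}(x_{0}y_{0})+T_{A}(x_{1}y_{2})+T_{A}(y_{1}x_{2})$, and one more application of $(\ref{Tr(xy) = Tr(yx)})$ to the last term turns this into $T(x,y)$.

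For (i) I would evaluate the $A$-valued cubic map $N$ along the affine line $x+Z\cdot 1=(x_{0}+Z,x_{1},x_{2})$: expanding $N(x+Z\cdot 1)=N_{A}(x_{0}+Z)+\mu N_{A}(x_{1})+\mu^{-1}N_{A}(x_{2})-T_{A}\big((x_{0}+Z)x_{1}x_{2}\big)$ and using $N_{A}(x_{0};1)=S_{A}(x_{0})$, $N_{A}(1;x_{0})=T_{A}(x_{0})$ and $N_{A}(1)=1$, the coefficient of $Z$ is $S_{A}(x_{0})-T_{A}(x_{1}x_{2})$, so $S(x)=N(x;1)=S_{A}(x_{0})-T_{A}(x_{1}x_{2})$. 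Since the first component of $x^{\sharp}$ is $x_{0}^{\sharp}-x_{1}x_{2}$ and $S_{A}(x_{0})=T_{A}(x_{0}^{\sharp})$, this equals $T_{A}(x_{0}^{\sharp}-x_{1}x_{2})=T(x^{\sharp})$. Linearizing $S(x)=S_{A}(x_{0})-T_{A}(x_{1}x_{2})$ gives $S(x,y)=S_{A}(x_{0},y_{0})-T_{A}(x_{1}y_{2})-T_{A}(y_{1}x_{2})$, and substituting the identity $S_{A}(x_{0},y_{0})=T_{A}(x_{0})T_{A}(y_{0})-T_{A}(x_{0}y_{0})$ in $A$ (which follows from $S_{A}(x,y)=N_{A}(x,y,1)$, the symmetry of $N_{A}$, the definition $T_{A}(x,y)=T_{A}(x)T_{A}(y)-N_{A}(1,x,y)$ and the trace-product formula $(\ref{Trace-product formula})$) together with $(\ref{Tr(xy) = Tr(yx)})$ on the remaining term identifies the right-hand side with $T(x)T(y)-T(x,y)$. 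The only place that asks for any care is this last linearization, because $N$ is $A$-valued rather than $F$-valued; but since $\mu$, $\mu^{-1}$ and all the $N_{A}$- and $T_{A}$-terms are evaluated on fixed elements while only the central scalar $Z$ varies, the bookkeeping is formally identical to the classical degree-three setting, so the $A$-valuedness is harmless and there is no real obstacle.
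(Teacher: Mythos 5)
Your proposal is correct and follows essentially the same route as the paper's proof: all three parts are verified componentwise by reducing to the degree-three identities in $A$, with the only cosmetic difference being that in (i) you expand $N(x+Z\cdot 1)$ directly rather than first writing out the full linearization $N(x;y)$ and then specializing $y=1$. The intermediate identity $S(x)=S_{A}(x_{0})-T_{A}(x_{1}x_{2})$, the use of $S_{A}(x_{0},y_{0})=T_{A}(x_{0})T_{A}(y_{0})-T_{A}(x_{0}y_{0})$, and the computations for (ii) and (iii) all match the paper.
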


\begin{proof}
(i) 
Let $x = (x_0, x_1, x_2), y = (y_0, y_1, y_2) \in J(A,a)$, then
$$
N(x; y) = N_A(x_0; y_0) +  \mu  N_A(x_1; y_1) +  \mu^{-1}N_A(x_2; y_2)
 - T_A(x_0x_1y_2) - T_A(x_0y_1x_2) - T_A(y_0x_1x_2),
$$
and since $S(x) = N(x; 1)$ we get
$S(x) =  N_A(x_0; 1) - T_A(x_1x_2) = S_A(x_0) - T_A(x_1x_2).$
On the other hand,
$$
T(x^\sharp)  = T_A(x_0^\sharp - x_1x_2) = T_A(x_0^\sharp) - T_A(x_1x_2)
= S_A(x_0) - T_A(x_1x_2) = S(x).
$$
We have  $S_A(x_0,y_0) = T_A(x_0)T_A(y_0) - T_A(x_0,y_0)$ for all $x_0, y_0 \in A$. Linearizing $S$ gives
$
S(x,y) = S_A(x_0, y_0) - T_A(x_1y_2) - T_A(y_1x_2)= T_A(x_0)T_A(y_0) - T_A(x_0, y_0) - T_A(x_1y_2) - T_A(y_1x_2)= T(x)T(y) - T(x,y)$
using the definitions of $T_A(x_i)$ and $T_A(x_i, y_i)$ and the fact that $T_A(x_0, y_0) = T_A(x_0y_0)$.
\\
(ii) Let $ x = (x_{0},x_{1},x_{2}), y = (y_{0},y_{1},y_{2})\in J(A,\mu)$. Since $ T_{A} $ is linear, we get
\begin{align*}
T(xy) &= T_{A}(x_{0}\cdot{y_{0}}) + T_{A}(\overline{x_{1}y_{2}})+T_{A}(\overline{y_{1}x_{2}}) \\ &= \frac{1}{2}(T_{A}(x_{0}y_{0})+T_{A}(y_{0}x_{0})) + \frac{1}{2}(T_{A}(x_{1}y_{2})T_{A}(1) - T_{A}(x_{1}y_{2})) \\ &\hspace{0.4cm}+\frac{1}{2}(T_{A}(y_{1}x_{2})T_{A}(1)-T_{A}(y_{1}x_{2})).
\end{align*}
By (\ref{Tr(xy) = Tr(yx)}) we get $ T_{A}(x_{0}y_{0}) = T_{A}(y_{0}x_{0}) $ and $ T_{A}(y_{1}x_{2}) = T_{A}(x_{2}y_{1}) $. Since we have $ T_{A}(1) = 3 $ we get
$T(xy) = T_{A}(x_{0}y_{0}) + T_{A}(x_{1}y_{2}) + T_{A}(x_{2}y_{1}) = T(x,y). $
\\ (iii) Let $ x = (x_{0},x_{1},x_{2}) \in J(A,\mu)$. By Lemma \ref{sharp alternative}, we have $ x_{0}\sharp  1 = T_{A}(x_{0})1-x_{0} $. Thus
$x\sharp  1 = (x_{0}\sharp  1,-x_{1},-x_{2}) = T(x)1 - x. $
\end{proof}

\begin{theorem} \label{some relations}
Let $ \mu \in A^{\times} $, and let $ x,y \in J(A,\mu) $. Then
\\ (i) $x^\sharp=x^2-T(x)x+S(x)1$,
\\ (ii) $S(x)=T(x^\sharp)$,
\\ (iii) $T(x\times y)=\frac{1}{2}(T(x)T(y)-T(xy))$.
\end{theorem}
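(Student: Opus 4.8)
The plan is to treat the three parts in order, doing the real work in (i), obtaining (ii) directly from Theorem~\ref{id1}(i), and deriving (iii) by linearization.

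For (i), I would write $x=(x_0,x_1,x_2)$ and compute $x^2$ from the multiplication rule. The diagonal terms collapse: using $x_0\cdot x_0=x_0^2$ and $x_i\times x_i=x_i^\sharp$ (equation~(\ref{axa = a adj})), one gets
\[
x^2=\big(x_0^2+2\,\overline{x_1x_2},\ 2\,\overline{x_0}\,x_1+\mu^{-1}x_2^\sharp,\ 2x_2\,\overline{x_0}+\mu\,x_1^\sharp\big).
\]
Since $T(x)=T_A(x_0)$ and, as computed in the proof of Theorem~\ref{id1}(i), $S(x)=S_A(x_0)-T_A(x_1x_2)$, I then substitute the identities $2\overline{a}=T_A(a)1-a$ and $x_0^\sharp=x_0^2-T_A(x_0)x_0+S_A(x_0)1$ into $x^2-T(x)x+S(x)1$ componentwise. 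In the first slot the $T_A(x_1x_2)1$ contributions cancel against $2\overline{x_1x_2}=T_A(x_1x_2)1-x_1x_2$, leaving $x_0^\sharp-x_1x_2$; in the second and third slots the $T_A(x_0)x_i$ contributions cancel against $2\overline{x_0}\,x_1=T_A(x_0)x_1-x_0x_1$ and $2x_2\,\overline{x_0}=T_A(x_0)x_2-x_2x_0$, leaving $\mu^{-1}x_2^\sharp-x_0x_1$ and $\mu\,x_1^\sharp-x_2x_0$. These are exactly the three components of $x^\sharp$, proving (i).

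Part (ii) is nothing but the first assertion of Theorem~\ref{id1}(i), so I would simply cite it. For (iii), with $x\times y=\frac12(x\sharp y)$ as in the classical case, the $F$-linearity of $T$ gives $T(x\times y)=\frac12 T(x\sharp y)=\frac12\big(T((x+y)^\sharp)-T(x^\sharp)-T(y^\sharp)\big)$. Replacing each $T(z^\sharp)$ by $S(z)$ via (ii) turns this into $\frac12\big(S(x+y)-S(x)-S(y)\big)=\frac12 S(x,y)$, and then Theorem~\ref{id1}(i) gives $S(x,y)=T(x)T(y)-T(x,y)$ while Theorem~\ref{id1}(ii) gives $T(x,y)=T(xy)$; hence $T(x\times y)=\frac12(T(x)T(y)-T(xy))$.

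The only genuine work is the bookkeeping in (i). There is no conceptual obstacle, since every step reduces to an identity already available in $A$ (namely $x_i\times x_i=x_i^\sharp$, $2\overline{a}=T_A(a)1-a$, and the degree-three formula for $x_0^\sharp$) together with the expression $S(x)=S_A(x_0)-T_A(x_1x_2)$ established in Theorem~\ref{id1}; one just has to keep track of the fact that $\mu$ appears on the correct side in the third component and $\mu^{-1}$ in the second, and that the products $x_1x_2$, $x_0x_1$, $x_2x_0$ are the (generally noncommutative) products in $A$.
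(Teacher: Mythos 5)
Your proposal is correct. Part (i) is exactly the paper's argument: compute $x^2$ from the multiplication rule using $x_i\times x_i=x_i^\sharp$, insert $S(x)=S_A(x_0)-T_A(x_1x_2)$, and cancel via $2\overline{a}=T_A(a)1-a$ and the degree-three formula for $x_0^\sharp$; part (ii) is literally the first assertion of Theorem \ref{id1}(i), which the paper recomputes and you simply cite. The only real divergence is in (iii): the paper expands $x\sharp y$ componentwise and takes $T_A$ of the first slot, using $T_A(x_0\times y_0)=\tfrac12(T_A(x_0)T_A(y_0)-T_A(x_0y_0))$, whereas you stay coordinate-free, writing $T(x\sharp y)=T((x+y)^\sharp)-T(x^\sharp)-T(y^\sharp)=S(x,y)$ and then invoking $S(x,y)=T(x)T(y)-T(x,y)$ and $T(x,y)=T(xy)$ from Theorem \ref{id1}. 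Your route is slightly cleaner in that it reuses already-established identities and avoids another component calculation, at the cost of depending on (ii) and on the linearization convention $S(x,y)=S(x+y)-S(x)-S(y)$; the paper's version is self-contained and makes the role of $\mu$ (which drops out of the trace of the first component) visible explicitly. Both are valid.
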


Note that these are relations that also hold for a  cubic form with adjoint and base point
 $(N,\sharp,1)$ \cite{McC, PR}.

\begin{proof}
Let $ x = (x_{0},x_{1},x_{2}), y = (y_{0},y_{1},y_{2})\in J(A,\mu) $.
\\ (i) We have that
$x^{2} - T(x)x+S(x)1 = (x_{0},x_{1},x_{2})^{2}-T_{A}(x_{0})(x_{0},x_{1},x_{2})+(S_{A}(x_{0})1-T_{A}(x_{1}x_{2})1,0,0) = (x_{0}^{2}-T_{A}(x_{0})x_{0}+S_{A}(x_{0})1 + 2\overline{x_{1}x_{2}}-T_{A}(x_{1}x_{2})1, \mu^{-1}x_{2}^{\sharp}+2\overline{x_{0}}x_{1}-T_{A}(x_{0})x_{1},\mu x_{1}^{\sharp}+2x_{2}\overline{x_{0}}-T_{A}(x_{0})x_{2}) = (x_{0}^{\sharp}-x_{1}x_{2},\mu^{-1}x_{2}^{\sharp}-x_{0}x_{1}, \mu x_{1}^{\sharp}-x_{2}\overline{x_{0}}) = x^{\sharp}.
$
\\ (ii) As for the classical construction,
\begin{equation*}
T(x^{\sharp}) = T_{A}(x_{0}^{\sharp}-x_{1}x_{2}) = T_{A}(x_{0}^{\sharp}) - T_{A}(x_{1}x_{2}) = S_{A}(x_{0})-T_{A}(x_{1}x_{2}) = S(x).
\end{equation*}
(iii) Since $ x\times y = \frac{1}{2}(x \sharp y) = \frac{1}{2}(x_{0}\#y_{0}-x_{1}y_{2}-y_{1}x_{2}, \mu^{-1}(x_{2}\#y_{2})-x_{0}y_{1}-y_{0}x_{1},\mu(x_{1}\#y_{1})-x_{2}y_{0}-y_{2}x_{0}) $, we obtain
$
T(x \times y) = T_{A}(x_{0} \times y_{0})-\frac{1}{2}T_{A}(x_{1}y_{2})-\frac{1}{2}T_{A}(y_{1}x_{2}) = \frac{1}{2}(T_{A}(x_{0})T_{A}(y_{0})-T_{A}(x_{0}y_{0})-T_{A}(x_{1}y_{2})-T_{A}(y_{1}x_{2})) = \frac{1}{2}(T(x)T(y)-T(xy)).$
\end{proof}

Define  operators $ U_{x}, U_{x,y}: J(A,\mu) \rightarrow J(A,\mu) $ via
\begin{equation*}
U_{x}(y) = T(x,y)x-x^{\sharp }\sharp  y, \quad U_{x,y}(z) = U_{x+y}(z)-U_{x}(z)-U_{y}(z)
\end{equation*}
for all $ z \in J(A,\mu) $.

\begin{proposition} (cf. \cite[Proposition 5.2.4]{St2} without factor $\frac{1}{2}$ because of slightly different terminology)
For all $ x,y \in J(A,\mu) $, we have $ xy = \frac{1}{2}U_{x,y}(1) $.
\end{proposition}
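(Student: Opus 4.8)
The plan is to imitate the classical computation recalled in the excerpt for $A$: first evaluate $U_x(1)$, then linearise. Writing $U_x(1) = T(x,1)x - x^{\sharp}\sharp 1$, I would use that $T(x,1) = T(x)$ (immediate from the definition of the bilinear trace since $1 = (1,0,0)$, or from Theorem \ref{id1}(ii)), and that $x^{\sharp}\sharp 1 = T(x^{\sharp})1 - x^{\sharp} = S(x)1 - x^{\sharp}$, by Theorem \ref{id1}(iii) applied to the element $x^{\sharp}$ together with $T(x^{\sharp}) = S(x)$ from Theorem \ref{id1}(i) (equivalently Theorem \ref{some relations}(ii)). This gives
\[
U_x(1) = T(x)x - S(x)1 + x^{\sharp}.
\]

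Next I would linearise. From $U_{x,y}(1) = U_{x+y}(1) - U_x(1) - U_y(1)$ and the formula just obtained, bilinearity of $T$ gives $T(x+y)(x+y) - T(x)x - T(y)y = T(x)y + T(y)x$, while $(x+y)^{\sharp} - x^{\sharp} - y^{\sharp} = x\sharp y$ by definition of the bilinear sharp map and $S(x+y) - S(x) - S(y) = S(x,y) = T(x)T(y) - T(x,y)$ by Theorem \ref{id1}(i). Hence
\[
U_{x,y}(1) = T(x)y + T(y)x + x\sharp y - \bigl(T(x)T(y) - T(x,y)\bigr)1 .
\]

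It then remains to check that this expression equals $2xy$, i.e. the generalised sharp--product identity
\[
x\sharp y = 2\,xy - T(x)y - T(y)x + \bigl(T(x)T(y) - T(x,y)\bigr)1
\]
in $J(A,\mu)$. I would verify this componentwise for $x = (x_0,x_1,x_2)$, $y = (y_0,y_1,y_2)$, using the explicit formulas for the product, the adjoint, $T$ and $T(\cdot,\cdot)$, together with $2\overline{z} = T_A(z)1 - z$, $2(z\times w) = z\sharp w$, the symmetry $T_A(zw) = T_A(wz)$ from (\ref{Tr(xy) = Tr(yx)}), and the sharp--product identity $z\sharp w = 2(z\cdot w) - T_A(z)w - T_A(w)z + (T_A(z)T_A(w) - T_A(z\cdot w))1$ in $A$ from Section \ref{Degree 3 algebras}. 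In the zeroth component the scalar term $(T(x)T(y)-T(x,y))1$ is precisely what absorbs the leftover $T_A$-contributions from $2\overline{x_1y_2}$, $2\overline{y_1x_2}$ and from expanding $x_0\sharp y_0$ in $A$; in the first and second components only $2\overline{x_0}y_1 = T_A(x_0)y_1 - x_0y_1$ (and its analogues) and $2(x_2\times y_2) = x_2\sharp y_2$ are needed, and the scalar term plays no role. Comparing the last two displays yields $xy = \frac{1}{2}U_{x,y}(1)$.

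The only real difficulty is bookkeeping: in the zeroth component one must track several $\overline{\phantom{x}}$-terms and $T_A$-terms and see that they cancel against $(T(x)T(y)-T(x,y))1$; no idea beyond the degree-three identities of Section \ref{Degree 3 algebras} is required. As a by-product, since the formula for $U_{x,y}(1)$ is manifestly symmetric in $x$ and $y$, this computation also re-exhibits the commutativity of $J(A,\mu)$.
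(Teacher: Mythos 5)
Your proposal is correct and follows essentially the same route as the paper: compute $U_x(1) = T(x)x - T(x^{\sharp})1 + x^{\sharp}$, linearise to $U_{x,y}(1) = T(x)y + T(y)x + x\sharp y - T(x\sharp y)1$, and verify componentwise that this equals $2xy$ via the sharp--product identity in $A$. (Your scalar term $(T(x)T(y)-T(x,y))1$ agrees with the paper's $T(x\sharp y)1$ since $T(x\sharp y)=S(x,y)=T(x)T(y)-T(x,y)$ by Theorem~\ref{id1}(i).)
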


This generalizes the classical setup. Our proof is different to the one of \cite[Proposition 5.2.4]{St2}, which also proves this result without the factor $\frac{1}{2}$ because of the slightly different definition of the multiplication.

\begin{proof}
We  find that $ U_{x}(1) = T(x,1)x-x^{\sharp }\sharp  1 = T(x)x-T(x^{\sharp })1+x^{\sharp } $, where in the second equality we have used Theorem \ref{id1} and the fact that $ T(x,1) = T(x) $ by Theorem \ref{id1}. So
\begin{align*}
U_{x,y}(1) &= U_{x+y}(1)-U_{x}(1)-U_{y}(1) \\ &= T(x+y)(x+y)-T((x+y)^{\sharp })1+(x+y)^{\sharp }-T(x)x+T(x^{\sharp })1 -x^{\sharp } \\ &\hspace{0.4cm}-T(y)y+T(y^{\sharp })1-y^{\sharp } \\ &= T(x)y+T(y)x+x\sharp  y - T(x\sharp  y)1.
\end{align*}
We look at the first component of $ xy $ and $ U_{x,y}(1) $: let $ x = (x_{0},x_{1},x_{2}) $ and $ y = (y_{0},y_{1},y_{2}) $. Then the first component of $ U_{x,y}(1) = T(x)y+T(y)x+x\sharp  y - T(x\sharp  y)1 $ is
\begin{equation} \label{equation in middle of proof for this one}
T_{A}(x_{0})y_{0}+T_{A}(y_{0})x_{0}+x_{0}\sharp  y_{0}-x_{1}y_{2}-y_{1}x_{2} - T_{A}(x_{0}\sharp  y_{0}-x_{1}y_{2}-y_{1}x_{2})1.
\end{equation}
Using Lemma \ref{sharp alternative}, the linearity of $ T_{A} $ and (\ref{Tr(xy) = Tr(yx)}), we obtain after some simplification that (\ref{equation in middle of proof for this one}) is equal to
\begin{equation*}
2(x_{0} \cdot y_{0})+T_{A}(x_{1}y_{2})-x_{1}y_{2}+T_{A}(y_{1}x_{2})-y_{1}x_{2} = 2(x_{0}\cdot y_{0}) + 2 \hspace{0.05cm}\overline{x_{1}y_{2}}+2\hspace{0.05cm}\overline{y_{1}x_{2}}.
\end{equation*}
This is  equal to \(2\) times the first component of $ xy $. Now we look at the second component of $ xy $ and $ U_{x,y}(1) $: the second component of $ U_{x,y}(1) = T(x)y+T(y)x+x\sharp  y - T(x\sharp  y)1 $ is
\begin{equation*}
T_{A}(x_{0})y_{1}+T_{A}(y_{0})x_{1}+\mu^{-1}(x_{2}\sharp  y_{2})-x_{0}y_{1}-y_{0}x_{1} = 2\overline{x_{0}}y_{1}+2\overline{y_{0}}x_{1}+2\mu^{-1}(x_{2} \times y_{2}).
\end{equation*}
This is precisely equal to \(2\) times the second component of $ xy $. Finally, the third component of $ 2xy $ and $ U_{x,y}(1) $ are equal, too: The third component of $ U_{x,y}(1) = T(x)y+T(y)x+x\sharp  y - T(x\sharp  y)1 $ is
\begin{equation*}
T_{A}(x_{0})y_{2}+T_{A}(y_{0})x_{2}+\mu(x_{1}\sharp  y_{1})-x_{2}y_{0}-y_{2}x_{0} = 2x_{2}\overline{y_{0}}+2y_{2}\overline{x_{0}}+2\mu (x_{1} \times y_{1}).
\end{equation*}
This is precisely equal to \(2\) times the third component of $ xy $.
\end{proof}

\begin{theorem}
If $ \mu \in A^{\times} $ and $ A \neq F $, then $ \operatorname{Nuc}_{l}(J(A,\mu)) = \operatorname{Nuc}_{r}(J(A,\mu)) = F $.
\end{theorem}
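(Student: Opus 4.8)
The plan is to show that every element of $\operatorname{Nuc}_l(J(A,\mu))$ lies in $F1$. Since the multiplication defining $J(A,\mu)$ is symmetric in its two arguments (swap $x$ and $y$ and use that the Jordan product $\cdot$ and the sharp product are commutative), $J(A,\mu)$ is a commutative algebra, so $\operatorname{Nuc}_l = \operatorname{Nuc}_r$ and it suffices to treat the left nucleus; the inclusion $F1 \subseteq \operatorname{Nuc}_l$ holds in any unital algebra. Throughout write $u = (0,1,0)$ and $v = (0,0,1)$, and note $uv = 1$.

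First I would fix $x = (x_0,x_1,x_2) \in \operatorname{Nuc}_l(J(A,\mu))$ and use the associator $[x,u,v] = 0$, i.e. $(xu)v = x$. A direct computation with the multiplication formula gives $xu = (\overline{x_2},\overline{x_0},\mu\overline{x_1})$ and then $(xu)v = (\overline{\overline{x_0}},\,\mu^{-1}\overline{\mu\overline{x_1}},\,\overline{\overline{x_2}})$. Comparing first components yields $\overline{\overline{x_0}} = x_0$; since $\overline{\overline{a}} = \tfrac14(T_A(a)1 + a)$ this forces $T_A(x_0)1 = 3x_0$, so $x_0 \in F1$. As $(x_0,0,0) \in F1 \subseteq \operatorname{Nuc}_l$, the element $y := x - (x_0,0,0) = (0,x_1,x_2)$ again lies in $\operatorname{Nuc}_l(J(A,\mu))$, and it remains to prove $x_1 = x_2 = 0$.

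Next I would use associativity against $A_0$. Writing $\iota_0(a) = (a,0,0)$, the formula gives $y\iota_0(a) = (0,\overline{a}x_1,x_2\overline{a})$, hence $(y\iota_0(a))\iota_0(b) = (0,\overline{b}\,\overline{a}\,x_1,x_2\,\overline{a}\,\overline{b})$, while $y(\iota_0(a)\iota_0(b)) = y\iota_0(a\cdot b) = (0,\overline{a\cdot b}\,x_1,x_2\overline{a\cdot b})$. From $[y,\iota_0(a),\iota_0(b)] = 0$ we thus get $(\overline{b}\,\overline{a}-\overline{a\cdot b})x_1 = 0$ and $x_2(\overline{a}\,\overline{b}-\overline{a\cdot b}) = 0$ for all $a,b \in A$. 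Since $\overline{a+s1} = \overline{a}+s1$ for $s \in F$, both $\overline{b}\,\overline{a}-\overline{a\cdot b}$ and $\overline{a}\,\overline{b}-\overline{a\cdot b}$ are invariant under shifting $a$ and $b$ by scalars, so one may assume $a,b \in \ker T_A$. Then $\overline{a} = -\tfrac12 a$, $\overline{a\cdot b} = \tfrac12 T_A(ab)1 - \tfrac14(ab+ba)$, and after antisymmetrising in $a,b$ a short manipulation produces $ab\cdot x_1 = \tfrac23 T_A(ab)x_1$ and $x_2\cdot ab = \tfrac23 T_A(ab)x_2$ for all $a,b \in \ker T_A$. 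Consequently the $F$-linear maps $r \mapsto rx_1 - \tfrac23 T_A(r)x_1$ and $r \mapsto x_2 r - \tfrac23 T_A(r)x_2$ vanish on every product $ab$ with $a,b \in \ker T_A$, hence on the subspace $(\ker T_A)^2$ that these products span.

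The heart of the argument — and the step I expect to be the main obstacle — is the identity $(\ker T_A)^2 = A$. Granting it, $1 \in (\ker T_A)^2$, so evaluating the two linear maps above at $r = 1$ gives $x_1 = 2x_1$ and $x_2 = 2x_2$, whence $x_1 = x_2 = 0$ and $x = (x_0,0,0) \in F1$, completing the proof. To establish $(\ker T_A)^2 = A$ I would extend scalars to an algebraic closure $\overline F$: both $\ker T_A$ and the span $(\ker T_A)^2$ are compatible with this base change, and $T_A \otimes \overline F$ is the reduced trace of $A \otimes_F \overline F$. Since $A \neq F$ is separable of degree three, $A \otimes_F \overline F$ is one of $\overline F \times \overline F \times \overline F$, $M_2(\overline F) \times \overline F$, $M_3(\overline F)$, and in each case one checks explicitly that products of trace-zero elements span the whole algebra (for instance $E_{ij}E_{ji} = E_{ii}$ and $E_{ik}E_{kj} = E_{ij}$ inside $M_3$, together with analogous small computations in the other two cases and for the factor $\overline F$). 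This is also the only place the hypothesis $A \neq F$ enters: it forces $\dim_F A \geq 3$, so that the three split types above are an exhaustive list. All the remaining steps are routine bookkeeping with the multiplication formula and the elementary identities for the bar map, $\times$, $T_A$ and the sharp map recorded in Section~\ref{sec:preliminaries}.
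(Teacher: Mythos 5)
Your proof is correct, and after the first step it diverges from the paper's. Both arguments open identically: from the associator $[x,(0,1,0),(0,0,1)]=0$ you and the paper each extract $\overline{\overline{x_0}}=x_0$, hence $x_0=\tfrac13 T_A(x_0)\in F$. From there the routes split. You observe that $J(A,\mu)$ is commutative (each component of the product is symmetric in $x$ and $y$ because $\cdot$ and $\times$ are), so $\operatorname{Nuc}_l=\operatorname{Nuc}_r$ and only one nucleus need be computed --- a genuine economy, since the paper redoes the entire calculation for the right nucleus. To kill $x_1$ and $x_2$ you then test associativity against pairs from $A_0$, reduce via the shift-invariance of $\overline{b}\,\overline{a}-\overline{a\cdot b}$ to trace-zero $a,b$, and land on the spanning statement $\operatorname{span}\bigl((\ker T_A)^2\bigr)=A$, verified by base change to $\overline{F}$ and a case check over the three split forms $\overline{F}^3$, $M_2(\overline{F})\times\overline{F}$, $M_3(\overline{F})$; evaluating your linear identity at $r=1$ then forces $x_1=x_2=0$. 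I checked the bookkeeping (the identities $(ab)x_1=\tfrac23T_A(ab)x_1$ and $x_2(ab)=\tfrac23T_A(ab)x_2$ do follow from symmetrising and antisymmetrising, and the three split cases do satisfy the spanning claim, using $\mathrm{char}\,F\neq 2,3$), so the argument goes through. The paper avoids all of this structure theory: it first also pins $x_1,x_2\in F$ from the associators with $(0,1,0)$ and $(0,0,1)$, and then a single further associator against $(0,a,0)$ or $(0,0,a)$ for one chosen $a\in A\setminus F$ yields $\overline{\overline{a}}x_i=x_i a$, which for $x_i\in F^\times$ forces $a\in F$, a contradiction. So the paper's finish is shorter and entirely elementary (this is also where its hypothesis $A\neq F$ enters, just as it enters your proof only through the exhaustiveness of the three split types), whereas your route buys the halving of work via commutativity and isolates a reusable structural fact about $\ker T_A$, at the price of a base-change case analysis.
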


\begin{proof}
 Let $ (x_{0},x_{1},x_{2}) \in \operatorname{Nuc}_{l}(J(A,\mu)) $, then
 $$ (x_{0},x_{1},x_{2})[(0,1,0)(0,0,1)] = [(x_{0},x_{1},x_{2})(0,1,0)](0,0,1) $$  implies that
$
(x_{0},x_{1},x_{2}) = (\overline{\overline{x_{0}}},\mu^{-1}(\overline{\mu \overline{x_{1}}}), \overline{\overline{x_{2}}}).
$, that means $ x_{0} = \overline{\overline{x_{0}}} $ and $ x_{2} = \overline{\overline{x_{2}}} $. Using the definition of $ \overline{\overline{x_{0}}} $, we obtain $ \overline{\overline{x_{0}}} = \frac{1}{4}(\operatorname{T}_{A}(x_{0})+x_{0}) $, so $ x_{0} = \frac{1}{4}(\operatorname{T}_{A}(x_{0})+x_{0}) $. Thus $ x_{0} = \frac{1}{3}\operatorname{T}_{A}(x_{0}) \in F $. Also, since $ x_{2} = \overline{\overline{x_{2}}} $, we find in a similar way  that $ x_{2} = \frac{1}{3}\operatorname{T}_{A}(x_{2}) \in F $. Next, since $ x = (x_{0},x_{1},x_{2}) \in \operatorname{Nuc}_{l}(J(A,\mu)) $, we have that $ (x_{0},x_{1},x_{2})[(0,0,1)(0,1,0)] = [(x_{0},x_{1},x_{2})(0,0,1)](0,1,0) $. This implies that
$
(x_{0},x_{1},x_{2}) = (\overline{\overline{x_{0}}}, \overline{\overline{x_{1}}}, \mu (\overline{\mu^{-1}\overline{x_{2}}})),
$
and so $ x_{1} = \overline{\overline{x_{1}}} $. We now find in a similar way  that $ x_{1} = \frac{1}{3}\operatorname{T}_{A}(x_{1}) \in F $, thus
$ \operatorname{Nuc}_{l}(J(A,\mu)) \subseteq \{(x_{0},x_{1},x_{2}) \in J \hspace{0.1cm} | \hspace{0.1cm} x_{0},x_{1},x_{2} \in F \} $.
 Let $ x = (x_{0},x_{1},x_{2}) \in \operatorname{Nuc}_{l}(J(A,\mu)) $, and let $ a \in A \setminus F $. Then $ (x_{0},x_{1},x_{2})[(0,0,1)(0,a,0)] = [(x_{0},x_{1},x_{2})(0,0,1)](0,a,0) $ which implies that
$
(x_{0} \cdot \overline{a}, \overline{\overline{a}}x_{1}, x_{2}\overline{\overline{a}}) = (\overline{a\overline{x_{0}}}, \overline{\overline{x_{1}}}a, \mu(\mu^{-1}\overline{x_{2}} \times a)),$
and so $ \overline{\overline{a}}x_{1} = \overline{\overline{x_{1}}}a $. Assume towards a contradiction that $ x_{1} \neq 0 $. Since $ x_{1} \in F $, $ x_{1} $ is invertible and $ \overline{\overline{x_{1}}} = x_{1} $. Thus the condition $ \overline{\overline{a}}x_{1} = \overline{\overline{x_{1}}}a $ yields $ a = \overline{\overline{a}} $, and so $ a = \frac{1}{3}\operatorname{T}_{A}(a) \in F $ which is a contradiction. Next, since $ (x_{0},x_{1},x_{2}) \in \operatorname{Nuc}_{l}(J(A,\mu)) $, we know that $ (x_{0},x_{1},x_{2})[(0,1,0)(0,0,a)] = [(x_{0},x_{1},x_{2})(0,1,0)](0,0,a) $ which implies that
$
(x_{0} \cdot \overline{a}, \overline{\overline{a}}x_{1}, x_{2}\overline{\overline{a}}) = (\overline{\overline{x_{0}}a}, \mu^{-1}(\mu\overline{x_{1}} \times a), a\overline{\overline{x_{2}}}),
$
and so $ x_{2}\overline{\overline{a}} = a\overline{\overline{x_{2}}} $. Assume towards a contradiction that $ x_{2} \neq 0 $. Then since $ x_{2} \in F $, $ x_{2} $ is invertible and $ \overline{\overline{x_{2}}} = x_{2} $. Thus the condition $ x_{2}\overline{\overline{a}} = a\overline{\overline{x_{2}}} $ yields $ a = \overline{\overline{a}} $, and so $ a = \frac{1}{3}\operatorname{T}_{A}(a) \in F $ which is a contradiction. Therefore, $ x = (x_{0},0,0)$, $ x_{0}1 \in F $ which shows that $ \operatorname{Nuc}_{l}(J(A,\mu)) = F$.
\\
Let $ (x_{0},x_{1},x_{2}) \in \operatorname{Nuc}_{r}(J(A,\mu)) $. Then $ (0,0,1)[(0,1,0)(x_{0},x_{1},x_{2})] = [(0,0,1)(0,1,0)](x_{0},x_{1},x_{2}) $  implies that
$
(\overline{\overline{x_{0}}},\mu^{-1}(\overline{\mu \overline{x_{1}}}), \overline{\overline{x_{2}}}) = (x_{0},x_{1},x_{2}).
$
 Hence $ x_{0} = \overline{\overline{x_{0}}} $ and $ x_{2} = \overline{\overline{x_{2}}} $. Using the definition of $ \overline{\overline{x_{0}}} $, we find that $ \overline{\overline{x_{0}}} = \frac{1}{4}(\operatorname{T}_{A}(x_{0})+x_{0}) $, so the condition $ x_{0} = \overline{\overline{x_{0}}} $ gives that $ x_{0} = \frac{1}{4}(\operatorname{T}_{A}(x_{0})+x_{0}) $. Thus $ x_{0} = \frac{1}{3}\operatorname{T}_{A}(x_{0}) \in F $. Also, since $ x_{2} = \overline{\overline{x_{2}}} $, we find in a similar way that $ x_{2} = \frac{1}{3}\operatorname{T}_{A}(x_{2}) \in F $. Next, since $ x = (x_{0},x_{1},x_{2}) \in \operatorname{Nuc}_{r}(J(A,\mu)) $, we have that $ (0,1,0)[(0,0,1)(x_{0},x_{1},x_{2})] = [(0,1,0)(0,0,1)](x_{0},x_{1},x_{2}) $. This implies that
$
(\overline{\overline{x_{0}}}, \overline{\overline{x_{1}}}, \mu (\overline{\mu^{-1}\overline{x_{2}}})) = (x_{0},x_{1},x_{2}),
$
and thus $ x_{1} = \overline{\overline{x_{1}}} $. We find in a similar way that $ x_{1} = \frac{1}{3}\operatorname{T}_{A}(x_{1}) \in F $, i.e.
$ \operatorname{Nuc}_{r}(J(A,\mu)) \subseteq \{(x_{0},x_{1},x_{2}) \in J \hspace{0.1cm} | \hspace{0.1cm} x_{0},x_{1},x_{2} \in F \} $.
\\  Let $ x = (x_{0},x_{1},x_{2}) \in \operatorname{Nuc}_{r}(J(A,\mu)) $, and let $ a \in A \setminus F $. Then $ (0,a,0)[(0,0,1)(x_{0},x_{1},x_{2})] = [(0,a,0)(0,0,1)](x_{0},x_{1},x_{2}) $ which implies that
$
(\overline{a\overline{x_{0}}}, \overline{\overline{x_{1}}}a, \mu(a \times \mu^{-1}\overline{x_{2}})) = (\overline{a} \cdot x_{0}, \overline{\overline{a}}x_{1},x_{2}\overline{\overline{a}}),
$
 therefore $ \overline{\overline{a}}x_{1} = \overline{\overline{x_{1}}}a $. Assume towards a contradiction that $ x_{1} \neq 0 $. Then since $ x_{1} \in F $, $ x_{1} $ is invertible and $ \overline{\overline{x_{1}}} = x_{1} $. Thus the condition $ \overline{\overline{a}}x_{1} = \overline{\overline{x_{1}}}a $ yields $ a = \overline{\overline{a}} $, and so $ a = \frac{1}{3}\operatorname{T}_{A}(a) \in F $ which is a contradiction. Next, since $ (x_{0},x_{1},x_{2}) \in \operatorname{Nuc}_{r}(J(A,\mu)) $, we know that $ (0,0,a)[(0,1,0)(x_{0},x_{1},x_{2})] = [(0,0,a)(0,1,0)](x_{0},x_{1},x_{2}) $ which implies that
$
(\overline{\overline{x_{0}}a},\mu^{-1}(a \times \mu \overline{x_{1}}), a\overline{\overline{x_{2}}}) = (\overline{a} \cdot x_{0}, \overline{\overline{a}}x_{1}, x_{2}\overline{\overline{a}}),
$
and so $ x_{2}\overline{\overline{a}} = a\overline{\overline{x_{2}}} $. Assume towards a contradiction that $ x_{2} \neq 0 $. Then since $ x_{2} \in F $, $ x_{2} $ is invertible and $ \overline{\overline{x_{2}}} = x_{2} $. Thus the condition $ x_{2}\overline{\overline{a}} = a\overline{\overline{x_{2}}} $ yields $ a = \overline{\overline{a}} $, and so $ a = \frac{1}{3}\operatorname{T}_{A}(a) \in F $ which is a contradiction. Therefore, $ x = (x_{0},0,0) = x_{0}1 \in F $ which shows the assertion.
\end{proof}

\begin{theorem} 
Let $A$ be a central simple division algebra of degree three and $ \mu \in A^{\times} $.  Then $ \operatorname{Nuc}_{m}(J(A,\mu)) \subseteq  C(A)  $.
In particular, if $ A \neq F $ is a central simple algebra over \(F\), then $ \operatorname{Nuc}_{m}(J(A,\mu)) = F $, and if
 \(A=K\) is a separable cyclic field extension of $F$,  then $ {\rm Nuc}_m(J(A,\mu)) \subset K$.
\end{theorem}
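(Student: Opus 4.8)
The plan is to feed a prospective element $n=(n_{0},n_{1},n_{2})$ of $\operatorname{Nuc}_{m}(J(A,\mu))$ into associators built from the ``coordinate'' elements $(a,0,0)$ and $(0,0,a)$, $a\in A$, exactly as in the proof of the previous theorem, and to read off the coordinate-wise conditions they force. Directly from the multiplication rule,
\[
(a,0,0)\,n=(a\cdot n_{0},\,\overline{a}\,n_{1},\,n_{2}\,\overline{a}),\qquad n\,(b,0,0)=(n_{0}\cdot b,\,\overline{b}\,n_{1},\,n_{2}\,\overline{b}),
\]
so $[(a,0,0),n,(b,0,0)]=0$ for all $a,b\in A$ is equivalent to the three identities $(a\cdot n_{0})\cdot b=a\cdot(n_{0}\cdot b)$, $(\overline{a}\,\overline{b}-\overline{b}\,\overline{a})n_{1}=0$ and $n_{2}(\overline{a}\,\overline{b}-\overline{b}\,\overline{a})=0$, for all $a,b\in A$.

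Suppose first that $A$ is central simple of degree three. Expanding the first identity via $x\cdot y=\tfrac12(xy+yx)$ shows it is equivalent to $n_{0}(ab-ba)=(ab-ba)n_{0}$ for all $a,b$; since the additive commutators of a central simple algebra span the codimension-one subspace $\ker T_{A}$, and hence, together with $1$, all of $A$, this forces $n_{0}\in C(A)=F$. As $a\mapsto\overline{a}$ is bijective, the last two identities say that $n_{1}$ is annihilated on the left, and $n_{2}$ on the right, by every element of $\ker T_{A}$; this subspace is nonzero (as $A$ is noncommutative) and, $A$ being a division algebra, contains an invertible element, whence $n_{1}=n_{2}=0$ (for the split algebra $A\cong M_{3}(F)$ one checks instead that a matrix one-sidedly annihilated by all trace-zero matrices must vanish). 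Therefore $\operatorname{Nuc}_{m}(J(A,\mu))\subseteq C(A)$, and since $J(A,\mu)$ is unital we always have $F\subseteq\operatorname{Nuc}(J(A,\mu))\subseteq\operatorname{Nuc}_{m}(J(A,\mu))$, so for $A\neq F$ central simple this yields $\operatorname{Nuc}_{m}(J(A,\mu))=F$.

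For $A=K$ a separable cyclic field extension the three identities above are vacuous, and I would instead use the elements $(0,0,a)$: here $(0,0,a)\,n=(\overline{n_{1}a},\,\mu^{-1}(a\times n_{2}),\,a\,\overline{n_{0}})$ and $n\,(0,0,b)=(\overline{n_{1}b},\,\mu^{-1}(n_{2}\times b),\,b\,\overline{n_{0}})$, and expanding $[(0,0,a),n,(0,0,b)]=0$, cancelling the unit $\mu$ and using injectivity of the bar map, symmetry of $\times$ and $c\times 1=\overline{c}$, the first and third coordinates (take $b=1$; a general $b$ adds nothing over $K$) give $n_{2}\times a=\overline{n_{2}}\,a$ and $\overline{\overline{n_{1}a}}=a\,\overline{\overline{n_{1}}}$ for all $a\in K$. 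Setting $a=n_{2}$ in the first relation and using $n_{2}^{\sharp}=n_{2}^{2}-T_{A}(n_{2})n_{2}+S_{A}(n_{2})1$, $2S_{A}(n_{2})=T_{A}(n_{2})^{2}-T_{A}(n_{2}^{2})$ and $n_{2}n_{2}^{\sharp}=N_{A}(n_{2})1$ collapses it to $n_{2}^{\sharp}\in F1$, so that $n_{2}\in F1$ since $K$ is a field, and then $n_{2}=0$ after substituting $n_{2}\in F1$ back into $n_{2}\times a=\overline{n_{2}}a$. Since $\overline{\overline{x}}=\tfrac14(T_{A}(x)1+x)$, the second relation reads $T_{A}(n_{1}a)1=T_{A}(n_{1})a$ for all $a\in K$; because $K\neq F$ this forces $T_{A}(n_{1})=0$, and then $T_{A}(n_{1}a)=0$ for all $a\in K$, so $n_{1}=0$ by nondegeneracy of the trace form of the separable algebra $K$. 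As $C(K)=K$ is exactly the copy of $K$ sitting in the zeroth component, this gives $\operatorname{Nuc}_{m}(J(K,\mu))\subseteq K$.

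The manipulations with the multiplication rule and the elementary identities of $A$ are routine; the one genuinely delicate point is the last case, namely converting $n_{2}\times a=\overline{n_{2}}a$ (and the analogous relation for $n_{1}$) into $n_{i}=0$. Over a division algebra this is purely multiplicative, but over the commutative field $K$ one is forced to combine the adjoint and trace identities of the cubic algebra $K$ with the absence of zero divisors in order to squeeze out $n_{2}^{\sharp}\in F1$ and then $n_{2}=0$.
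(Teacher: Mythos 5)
Your proposal is correct, and although it opens with the same basic move as the paper -- feeding a candidate middle-nucleus element into associators built from ``coordinate'' elements and comparing components -- the way you close each case is genuinely different. For $n_1$ and $n_2$ the paper fixes a single non-commuting pair $y_0,z_0$ and cancels the invertible element $x_1$ (resp.\ $x_2$) of the division algebra to reach the contradiction $\overline{y_0}\,\overline{z_0}=\overline{z_0}\,\overline{y_0}$; you instead note that the condition is linear in the commutator $[\overline{a},\overline{b}]=\frac14[a,b]$ and invoke the fact that the additive commutators of a central simple algebra span $\ker T_A$, which buys you the split case $A\cong M_3(F)$ as well (the paper's proof, as written, ends with ``this implies the assertion if $A$ is a central simple division algebra''). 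For $n_0$ the paper uses the mixed associator $[(0,1,0),(x_0,0,0),(0,0,y_2)]$ after first killing $x_1,x_2$, whereas you extract $n_0\in C(A)$ directly from the first coordinate of the pure-$A_0$ associator, i.e.\ from $\operatorname{Nuc}_m(A^+)=C(A)$, again via the commutator-span fact. Most notably, you actually prove the cyclic-field-extension clause: the paper's argument is vacuous for commutative $A$ (there is no $y_0\notin C(A)$ to choose) and its written proof stops at the division-algebra case, while your computation with $[(0,0,a),n,(0,0,1)]$, combined with $x\times x=x^\sharp$, $x^\sharp=x^2-T_A(x)x+S_A(x)1$, $2S_A(x)=T_A(x)^2-T_A(x^2)$ and the nondegeneracy of the trace form of $K$, correctly forces $n_1=n_2=0$ there. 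The one external input you use that the paper does not is the identification of the span of commutators with the reduced-trace-zero subspace; this is standard for central simple algebras but should be cited or given a one-line descent argument if written up.
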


\begin{proof}
Let $ x = (x_{0},x_{1},x_{2}) \in \operatorname{Nuc}_{m}(J) $, and let $ y_{0} \notin  C(A) $. Then there exists  $ z_{0} \in A $ such that $ y_{0}z_{0} \neq z_{0}y_{0} $. Since $ (x_{0},x_{1},x_{2}) \in \operatorname{Nuc}_{m}(J(A,\mu)) $, we know that
$(y_{0},0,0)[(x_{0},x_{1},x_{2})(z_{0},0,0)] = [(y_{0},0,0)(x_{0},x_{1},x_{2})](z_{0},0,0)$
which implies that
\begin{equation*}
(y_{0} \cdot (x_{0} \cdot z_{0}), \overline{y_{0}}(\overline{z_{0}}x_{1}), (x_{2}\overline{z_{0}})\overline{y_{0}}) = ((y_{0} \cdot x_{0}) \cdot z_{0}, \overline{z_{0}}(\overline{y_{0}}x_{1}),(x_{2}\overline{y_{0}})\overline{z_{0}}).
\end{equation*}
Comparing the second and third components yields
\begin{align}
\overline{y_{0}}(\overline{z_{0}}x_{1}) &= \overline{z_{0}}(\overline{y_{0}}x_{1}), \label{middle nucleus 1} \\ (x_{2}\overline{z_{0}})\overline{y_{0}} &= (x_{2}\overline{y_{0}})\overline{z_{0}}. \label{middle nucleus 2}
\end{align}
Now assume towards a contradiction that $ x_{1} \neq 0 $. Since \(A\) is a division algebra, $ x_{1} $ is invertible. Since \(A\) is associative, (\ref{middle nucleus 1}) implies that $ \overline{y_{0}} \hspace{0.1cm} \overline{z_{0}} = \overline{z_{0}} \hspace{0.1cm} \overline{y_{0}} $. By definition, this yields
\begin{equation*}
(T_{A}(y_{0})1 - y_{0})(T_{A}(z_{0})1-z_{0}) = (T_{A}(z_{0})1-z_{0})(T_{A}(y_{0})1-y_{0}).
\end{equation*}
Hence $ y_{0}z_{0} = z_{0}y_{0} $ which is a contradiction. Now in a similar way we assume towards a contradiction that $ x_{2} \neq 0 $. Then since \(A\) is a division algebra, $ x_{2} $ is invertible. Since \(A\) is associative, (\ref{middle nucleus 2}) implies again that $ \overline{y_{0}} \hspace{0.1cm} \overline{z_{0}} = \overline{z_{0}} \hspace{0.1cm} \overline{y_{0}} $. Hence $ y_{0}z_{0} = z_{0}y_{0} $ which is a contradiction.
 Next, since $ x = (x_{0},0,0) \in \operatorname{Nuc}_{m}(J(A,\mu)) $, we also have that
$(0,1,0)[(x_{0},0,0)(0,0,y_{2})] = [(0,1,0)(x_{0},0,0)](0,0,y_{2})$ for each $ y_{2} \in A $.
This implies
$(\overline{y_{2}\overline{x_{0}}},0,0) = (\overline{\overline{x_{0}}y_{2}},0,0),$
and so $ \overline{y_{2}\overline{x_{0}}} = \overline{\overline{x_{0}}y_{2}} $. By definition, this means that
\begin{equation} \label{Trace equation middle}
\frac{1}{2}T_{A}(\overline{y_{2}}x_{0})1-\frac{1}{2}\overline{y_{2}}x_{0} = \frac{1}{2}T_{A}(x_{0}\overline{y_{2}})1-\frac{1}{2}x_{0}\overline{y_{2}}.
\end{equation}
We know that $ T_{A}(\overline{y_{2}}x_{0}) = T_{A}(x_{0}\overline{y_{2}}) $ (see (\ref{Tr(xy) = Tr(yx)})), and so (\ref{Trace equation middle}) gives that $ \overline{y_{2}}x_{0} = x_{0}\overline{y_{2}} $. By using the definition of $ \overline{y_{2}} $, this implies that $ y_{2}x_{0} = x_{0}y_{2} $. Hence $ x_{0} \in C(A) $. Therefore,
$x = (x_{0},0,0) = x_{0}1 \in C(A).$
Since  $ F \subseteq \operatorname{Nuc}_{m}(J(A,\mu)) $ this implies the assertion if $A$ is a central simple division algebra.
\end{proof}

\begin{theorem} \label{division algebra J(A,mu)} (\cite[Chapter IX, Section 12]{Jacobson}\cite[Chapter C.5]{McC})
For $\mu\in F^\times$,
 $ J(A,\mu) $ is a division algebra if and only if
 $ \mu \notin N_{A}(A^\times) $ and \(A\) is a division algebra, if and only if
 $ N $ is anisotropic.
\end{theorem}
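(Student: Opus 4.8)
The plan is to use that, for $\mu\in F^{\times}$, $J(A,\mu)$ is exactly the classical first Tits construction, hence the cubic Jordan algebra $J(N,\sharp,1)$, and to split the triple equivalence into: (1) $J(A,\mu)$ is a division algebra if and only if $N$ is anisotropic; and (2) $N$ is anisotropic if and only if $A$ is a division algebra and $\mu\notin N_{A}(A^{\times})$. For (1) I would appeal to the standard theory of cubic Jordan algebras (\cite[Chapter IX, Section 12]{Jacobson}, \cite[Chapter C.5]{McC}), which supplies the identities $x\,x^{\sharp}=N(x)1$ and $x^{\sharp\sharp}=N(x)x$ on $J(A,\mu)=J(N,\sharp,1)$: if $N(x)\neq0$ then $x$ is invertible with inverse $N(x)^{-1}x^{\sharp}$, so $N$ anisotropic makes every nonzero element invertible and $J(A,\mu)$ a division algebra; conversely, if $N(x)=0$ with $x\neq0$ then $x\cdot x^{\sharp}=0$, which exhibits zero divisors when $x^{\sharp}\neq0$, while if $x^{\sharp}=0$ then $S(x)=T(x^{\sharp})=0$ by Theorem~\ref{some relations}(ii), so Theorem~\ref{some relations}(i) gives $x^{\sharp}=x^{2}-T(x)x$ and hence $x\cdot(x-T(x)1)=0$ with $x-T(x)1\neq0$ (otherwise $x\in F1$ and $N(x)=T(x)^{3}=0$ forces $x=0$).

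For (2), one direction is immediate: restricting $N$ to $A_{0}$ gives $N_{A}$, so $N$ anisotropic forces $A$ to be a division algebra; and if $\mu=N_{A}(c)$ with $c\in A^{\times}$, then $(0,1,-c^{2})\neq0$ satisfies $N\big((0,1,-c^{2})\big)=\mu-\mu^{-1}N_{A}(c)^{2}=\mu-\mu^{-1}\mu^{2}=0$, so $\mu\notin N_{A}(A^{\times})$ is necessary. For the converse, assume $A$ is a division algebra and $\mu\notin N_{A}(A^{\times})$, and suppose for contradiction that $N(x)=0$ for some nonzero $x$. First I would pass to a rank one element: since $x^{\sharp\sharp}=N(x)x=0$, the element $e:=x$ (if $x^{\sharp}=0$) or $e:=x^{\sharp}$ (otherwise) is nonzero with $e^{\sharp}=0$, whence $N(e)=0$. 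Writing $e=(e_{0},e_{1},e_{2})$, the equation $e^{\sharp}=0$ unfolds into
\[
e_{0}^{\sharp}=e_{1}e_{2},\qquad e_{1}^{\sharp}=\mu^{-1}e_{2}e_{0},\qquad e_{2}^{\sharp}=\mu\,e_{0}e_{1}.
\]
Since $A$ is a division algebra (so $z^{\sharp}=0\Leftrightarrow z=0$, and $A$ has no zero divisors), a brief inspection of these relations shows that $e_{i}=0$ for some $i$ forces $e=0$; hence $e_{0},e_{1},e_{2}\in A^{\times}$.

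The key step is then to apply the adjoint in $A$ to each of the three relations, use $(ab)^{\sharp}=b^{\sharp}a^{\sharp}$ and $z^{\sharp\sharp}=N_{A}(z)z$, substitute the other two relations, and cancel the invertible left-hand factor. This yields
\[
e_{1}e_{2}e_{0}=N_{A}(e_{0})1,\qquad e_{2}e_{0}e_{1}=\mu\,N_{A}(e_{1})1,\qquad e_{0}e_{1}e_{2}=\mu^{-1}N_{A}(e_{2})1.
\]
Now $e_{1}e_{2}e_{0}$ and $e_{2}e_{0}e_{1}$ are cyclic rearrangements of the same product, so $T_{A}(e_{1}e_{2}e_{0})=T_{A}(e_{2}e_{0}e_{1})$; comparing the traces of the first two identities (with $T_{A}(1)=3$) gives $N_{A}(e_{0})=\mu N_{A}(e_{1})$, and multiplicativity of $N_{A}$ then forces $\mu=N_{A}(e_{0}e_{1}^{-1})\in N_{A}(A^{\times})$, a contradiction. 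Hence $N$ is anisotropic.

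I expect the main obstacle to be exactly this last passage: one must manufacture from a single isotropic vector enough rigid identities inside $A$ to pin $\mu$ down as a reduced norm. Passing to the rank one element $e$ with $e^{\sharp}=0$ is what makes all three coordinate relations $e_{i}^{\sharp}=(\text{scalar})\,e_{j}e_{k}$ available simultaneously, and exploiting the cyclic symmetry of $T_{A}$ (rather than only the multiplicative relations among the $N_{A}(e_{i})$, which leave a cube root of unity ambiguity) is what actually produces $\mu$ as a norm. All remaining points (multiplicativity of $N_{A}$, $N_{A}(1)=1$, $N_{A}(-1)=-1$, the equivalence $z^{\sharp}=0\Leftrightarrow z=0$ over a cubic division algebra, and the elementary case check ruling out $e_{i}=0$) are routine facts about separable cubic associative algebras.
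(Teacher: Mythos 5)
The paper gives no proof of this theorem at all---it is quoted from Jacobson and McCrimmon---so there is no in-house argument to measure you against; what you have written is essentially the standard classical proof, and most of it is sound. In particular, the hard implication ($A$ a division algebra and $\mu\notin N_{A}(A^{\times})$ implies $N$ anisotropic) via a rank-one element $e\neq 0$ with $e^{\sharp}=0$, the check that all three components $e_{i}$ must then be invertible, and the extraction of $\mu=N_{A}(e_{0}e_{1}^{-1})$ from $e_{1}e_{2}e_{0}=N_{A}(e_{0})1$ and $e_{2}e_{0}e_{1}=\mu N_{A}(e_{1})1$ using cyclicity of $T_{A}$ is exactly the textbook computation; it is also, in substance, the same manipulation the paper performs for general $\mu$ in Theorem~\ref{generalied division algebra tits}(iv) (the ``six equations''), so your route meshes well with the paper's later material. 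The isotropic vector $(0,1,-c^{2})$ when $\mu=N_{A}(c)$, and the zero divisors $x\cdot x^{\sharp}=0$ (resp.\ $x\cdot(x-T(x)1)=0$ when $x^{\sharp}=0$, via Theorem~\ref{some relations}) are likewise correct, modulo the harmless slip that $N(\lambda 1)=\lambda^{3}=(T(x)/3)^{3}$, not $T(x)^{3}$.

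The one genuine gap is in the implication ``$N$ anisotropic $\Rightarrow J(A,\mu)$ is a division algebra.'' You argue that every nonzero $x$ has the two-sided inverse $N(x)^{-1}x^{\sharp}$ and conclude directly that $J(A,\mu)$ is a division algebra. With the paper's definition (all $L_{a},R_{a}$ bijective, equivalently no zero divisors), this inference is invalid in general for a commutative nonassociative algebra: in $\mathbb{H}^{+}$ every nonzero element is invertible, yet $i\cdot j=\frac{1}{2}(ij+ji)=0$, so invertibility of all nonzero elements is strictly weaker than being a division algebra in this sense. For cubic Jordan algebras with anisotropic norm the conclusion does hold, but it needs a further argument (for instance via invertibility of the $U$-operators together with degree-3 identities, or a direct argument that $x\circ y=0$ with $N(x),N(y)\neq 0$ is impossible); this is exactly the step your sketch leaves to the cited references, and it is the only place where the proposal is not self-contained.
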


 The general situation is much harder to figure out and we were only able to obtain some obvious necessary conditions:

\begin{theorem} \label{generalied division algebra tits}
Let $\mu\in A^\times$.
\\ (i) 
If $ J(A,\mu) $ is a division algebra then $ \mu \notin N_{A}(A^\times)$ and \(A\) is a division algebra.
\\ (ii) Let $A$ be a division algebra over $F$. If 1, $\mu$, $\mu^2$ are linearly independent over $F$ then $N$ is anisotropic.
\\ (iii) 
If $N$ is anisotropic then $A$ is a division algebra and $ \mu \notin N_{A}(A^\times)$.
\\ (iv) 
Let $ 0\neq x = (x_{0},x_{1},x_{2}) \in J(A,\mu) $.   Then $ x^{\sharp } = 0 $ implies that $A$ has zero divisors, or $A$ is a division algebra and $ \mu \in N_{A}(A^\times) $.
\end{theorem}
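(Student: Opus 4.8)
My plan is to establish the four parts independently, in the order (iii), (ii), (iv), (i): the first two reduce either to exhibiting isotropic vectors for $N$ or to a linear‑independence argument, while (i) will be reduced to the classical Theorem~\ref{division algebra J(A,mu)}. For (iii) I argue by contraposition. If $A$ is not a division algebra it contains a nonzero non‑invertible element $a$, so $N_A(a)=0$ and hence $N((a,0,0))=N_A(a)\cdot 1=0$ with $(a,0,0)\neq 0$; and if $\mu=N_A(u)$ for some $u\in A^\times$ — which forces $\mu\in F^\times$, since $N_A(A^\times)\subseteq F^\times$ — then $N((u,-1,0))=(N_A(u)-\mu)\cdot 1=0$ with $(u,-1,0)\neq 0$. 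Either way $N$ is isotropic, and the two implications together give (iii) by contraposition. For (ii), assume $A$ is a division algebra and $1,\mu,\mu^2$ are $F$-linearly independent, and let $N(x)=0$. Left-multiplying the relation $N_A(x_0)\cdot 1+N_A(x_1)\mu+N_A(x_2)\mu^{-1}-T_A(x_0x_1x_2)\cdot 1=0$ by $\mu\in A^\times$ turns it into $N_A(x_2)\cdot 1+(N_A(x_0)-T_A(x_0x_1x_2))\mu+N_A(x_1)\mu^2=0$; comparing $F$-coefficients gives $N_A(x_1)=N_A(x_2)=0$, hence $x_1=x_2=0$ as $A$ is a division algebra, and then $N_A(x_0)=T_A(x_0x_1x_2)=0$, so $x_0=0$.

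For (iv), if $A$ is not a division algebra the first alternative holds, so I assume $A$ is a division algebra and read off from $x^\sharp=0$ the relations $x_0^\sharp=x_1x_2$, $x_2^\sharp=\mu(x_0x_1)$, $x_1^\sharp=\mu^{-1}(x_2x_0)$. A short case check shows that if any $x_i=0$ then $x=0$: for instance $x_1=0$ forces $x_0^\sharp=0$, hence $x_0=0$ because $x_0x_0^\sharp=N_A(x_0)\cdot 1$, hence $x_2^\sharp=0$ and $x_2=0$; the cases $x_0=0$ and $x_2=0$ are symmetric. So all $x_i$ are invertible, and substituting $x_i^\sharp=N_A(x_i)x_i^{-1}$ into the first and third relations and cancelling yields $N_A(x_1)x_1^{-1}=\mu^{-1}N_A(x_0)x_1^{-1}$, i.e.\ $\mu=N_A(x_0)N_A(x_1)^{-1}=N_A(x_0x_1^{-1})\in N_A(A^\times)$, using multiplicativity of $N_A$ on the associative algebra $A$.

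Finally, for (i): the special Jordan algebra $A^+$ is a subalgebra of $J(A,\mu)$ (identified with $A_0$), so if $J(A,\mu)$ has no zero divisors then neither does the finite-dimensional subalgebra $A_0\cong A^+$, whence $A^+$ — and therefore $A$, by Lemma~\ref{A+ division algebra then A} — is a division algebra. For the second conclusion, suppose $\mu\in N_A(A^\times)$; since $N_A(A^\times)\subseteq F^\times$ this puts $\mu\in F^\times$, so $J(A,\mu)$ is the classical first Tits construction and Theorem~\ref{division algebra J(A,mu)} then says it is not a division algebra, contradicting the hypothesis. Hence $\mu\notin N_A(A^\times)$.

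The only step that is more than bookkeeping is the case elimination in (iv), together with keeping track of exactly which $\sharp$-identities of $A$ are invoked; everything else is formal. I would deliberately not attempt any converse, since — as the paper emphasises — a genuine characterisation of when $J(A,\mu)$ is a division algebra for $\mu\notin F^\times$ is the hard open problem here.
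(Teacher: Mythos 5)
Your proof is correct and follows the same overall strategy as the paper's (same reliance on Lemma \ref{A+ division algebra then A}, Theorem \ref{division algebra J(A,mu)}, and the identity $x_ix_i^{\sharp}=N_A(x_i)1$), but several local steps are executed differently, and in each case your version is at least as solid. In (ii) the paper merely asserts that $N(x)=0$ forces $N_A(x_0)=0$ first, which is not literally how the linear-independence argument runs (the constant term of $N(x)$ picks up a contribution from $\mu^{-1}N_A(x_2)$ once $\mu^{-1}$ is rewritten in the basis $1,\mu,\mu^2$); your trick of left-multiplying by $\mu$ and reading off the coefficients of $1,\mu,\mu^2$ in the right order ($x_2$, then $x_1$, then $x_0$) is the clean way to do it. In (iii) the paper cites Theorem \ref{division algebra J(A,mu)} to rule out $\mu\in N_A(A^\times)$, whereas you exhibit the explicit isotropic vector $(u,-1,0)$; this is more self-contained and makes the contrapositive transparent. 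In (iv) the paper derives the six equations $N_A(x_0)=x_0x_1x_2=x_1x_2x_0$, $\mu^{-1}N_A(x_2)=x_0x_1x_2=x_2x_0x_1$, $\mu N_A(x_1)=x_2x_0x_1=x_1x_2x_0$ and splits on whether the common value $N_A(x_0)=\mu^{-1}N_A(x_2)=\mu N_A(x_1)$ vanishes, while you dispose of the non-division case first and then show by case elimination that all $x_i$ are invertible before solving for $\mu$; both give $\mu=N_A(x_0)N_A(x_1)^{-1}\in N_A(A^\times)$, and your bookkeeping of which $\sharp$-identities are used is accurate. Part (i) is identical to the paper's argument.
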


\begin{proof}
(i) Suppose that $ J(A,\mu) $ is a division algebra, then so is $A^+$ and thus $A$ (Lemma \ref{A+ division algebra then A}). Assume towards a contradiction that $ \mu = N_{A}(x_0)1 $ for some $ x_0 \in A^\times $. Then $ \mu \in F^\times$ and  $ J(A,\mu) $ is not a division algebra by Theorem \ref{division algebra J(A,mu)}.
 Hence $ \mu \notin N_{A}(A)1 $.
\\ (ii) Since $A$ is a division algebra, $N_A$ is anisotropic. So let $N((x_0,x_1,x_2))=0$, then the assumption means that $N(x_0)=0$, which implies that $x_0=0$. This immediately means that $x_1=x_2=0$, too.
\\ (iii) If $N$ is anisotropic then so is $N_A$, so clearly $A$ is a division algebra. Moreover then $ \mu \notin N_{A}(A^\times)$ by Theorem \ref{division algebra J(A,mu)}.
\\ (iv) Let $ 0\neq x = (x_{0},x_{1},x_{2}) \in J(A,\mu) $.   Then $ x^{\sharp } = 0 $ implies that
\begin{align}
x_{0}^{\sharp } &= x_{1}x_{2} \label{x0 hash} \\ \mu^{-1}x_{2}^{\sharp } &= x_{0}x_{1} \label{x2 hash} \\ \mu x_{1}^{\sharp } &= x_{2}x_{0}. \label{x1 hash}
\end{align}
We can now multiply (\ref{x0 hash}) (resp. (\ref{x2 hash}), (\ref{x1 hash})) by $ x_{0} $ (resp. $ x_{2} $, $ x_{1} $) on the right and left to obtain two new equations. Additionally, using the fact that $ N_{A}(x_{i}) = x_{i}x_{i}^{\sharp } = x_{i}^{\sharp }x_{i} $ for all $ i = 0,1,2 $, we get the following six equations:
\begin{equation} \label{six equations fun}
\begin{aligned}[c]
N_{A}(x_{0}) &= x_{1}x_{2}x_{0} \\
\mu^{-1}N_{A}(x_{2}) &= x_{0}x_{1}x_{2} \\
\mu N_{A}(x_{1}) &= x_{2}x_{0}x_{1}
\end{aligned}
\qquad\qquad
\begin{aligned}[c]
N_{A}(x_{0}) &= x_{0}x_{1}x_{2} \\
\mu^{-1}N_{A}(x_{2}) &= x_{2}x_{0}x_{1} \\
\mu N_{A}(x_{1}) &= x_{1}x_{2}x_{0}.
\end{aligned}
\end{equation}
These imply that $N_{A}(x_{0})=\mu^{-1}N_{A}(x_{2})=\mu N_{A}(x_{1})$. This means that either $N_{A}(x_{0})=\mu^{-1}N_{A}(x_{2})=\mu N_{A}(x_{1})=0$ and so $N_A$ is isotropic, or $N_{A}(x_{0})=\mu^{-1}N_{A}(x_{2})=\mu N_{A}(x_{1})\not=0$ and $N_A$ is anisotropic. In the later case,
$x_{0},x_{1},x_{2}$ are all invertible in $A$,
 $ N_{A}(x_{i}) \neq 0 $ for all $ i = 0,1,2 $ and it follows that $ \mu \in N_{A}(A^\times) $.
 This proves the assertion.
\end{proof}

In other words: If $A$ is a division algebra  and $ \mu \not\in N_{A}(A^\times) $, $ 0\neq x = (x_{0},x_{1},x_{2}) \in J(A,\mu) $, then $ x^{\sharp } \not= 0 $. Note that (iv) was a substantial part of the classical result that if $ \mu \in F^\times$, $ \mu \notin N_{A}(A^\times) $ and \(A\) is a division algebra, then $ N $ is anisotropic. What is missing in order to generalize this result to the generalized first Tits construction is the adjoint identity $ (x^{\sharp })^{\sharp } = N(x)x $. This identity only holds in very special cases, see Lemma \ref{adjoint in J(A,mu)} below. It would be of course desirable to have conditions on when (or if at all) $J(A,\mu)$ is a division algebra.

\subsection{The algebras $J_{lr}(A, \mu)$,  $ J_{rl}(A,\mu)$ and  $ J_{rr}(A,\mu)$}

If \(A\) is a (noncommutative) central simple algebra over $F$  and $ \mu \in A^{\times} $, we can change the positions of $ \mu $  and $ \mu^{-1} $ in the multiplication of $J(A,\mu)$. We now obtain unital nonassociative \(F\)-algebras  $ J_{lr}(A,\mu) $, $ J_{rl}(A,\mu) $ and $
J_{rr}(A,\mu) $ on $A\oplus A\oplus A$, with multiplication given by
$$(x_{0},x_{1},x_{2})(y_{0},y_{1},y_{2}) = (x_{0}\cdot y_{0}+\overline{x_{1}y_{2}}+\overline{y_{1}x_{2}},  \overline{x_{0}}y_{1}+\overline{y_{0}}x_{1}+\mu^{-1}(x_{2}\times y_{2}),
 x_{2}\overline{y_{0}}+y_{2}\overline{x_{0}}+(x_{1}\times y_{1})\mu), $$
$$(x_{0},x_{1},x_{2})(y_{0},y_{1},y_{2}) = (x_{0}\cdot y_{0}+\overline{x_{1}y_{2}}+\overline{y_{1}x_{2}}, \overline{x_{0}}y_{1}+\overline{y_{0}}x_{1}+(x_{2}\times y_{2})\mu^{-1},
x_{2}\overline{y_{0}}+y_{2}\overline{x_{0}}+\mu(x_{1}\times y_{1})),$$
$$(x_{0},x_{1},x_{2})(y_{0},y_{1},y_{2}) = (x_{0}\cdot y_{0}+\overline{x_{1}y_{2}}+\overline{y_{1}x_{2}}, \overline{x_{0}}y_{1}+\overline{y_{0}}x_{1}+(x_{2}\times y_{2})\mu^{-1},
 x_{2}\overline{y_{0}}+y_{2}\overline{x_{0}}+(x_{1}\times y_{1})\mu),$$
respectively. The unital nonassociative algebras $ J_{lr}(A,\mu) $, $ J_{rl}(A,\mu) $ and $ J_{rr}(A,\mu) $  over \(F\)  also generalize an Albert algebra. For $ x = (x_{0},x_{1},x_{2}) \in A \oplus A \oplus A $, we define the \emph{adjoint} on  $ J_{lr}(A,\mu), J_{rl}(A,\mu) $ and $ J_{rr}(A,\mu) $ by
\begin{align*}
x^{\sharp } &= (x_{0}^{\sharp }-x_{1}x_{2},\mu^{-1}x_{2}^{\sharp }-x_{0}x_{1}, x_{1}^{\sharp }\mu-x_{2}x_{0}), \\
x^{\sharp } &= (x_{0}^{\sharp }-x_{1}x_{2},x_{2}^{\sharp }\mu^{-1}-x_{0}x_{1},\mu x_{1}^{\sharp }-x_{2}x_{0}), \\
x^{\sharp } &= (x_{0}^{\sharp }-x_{1}x_{2},x_{2}^{\sharp }\mu^{-1}-x_{0}x_{1}, x_{1}^{\sharp }\mu-x_{2}x_{0}),
\end{align*}
respectively. Let $ y = (y_{0},y_{1},y_{2}) \in A \oplus A \oplus A $. For the \(F\)-algebra $ J = J_{lr}(A,\mu) $, resp. $ J= J_{rl}(A,\mu) $, $ J = J_{rr}(A,\mu) $, we can define again a \emph{(generalized) cubic norm} $ N: J \rightarrow A $, a \emph{(generalized) trace} $ T: J \rightarrow F $, and a \emph{(generalized) sharp map} $ \sharp  :J\times J\rightarrow J$ as
\begin{align*}
N(x) &= N_{A}(x_{0})1+\mu N_{A}(x_{1})+ \mu^{-1}N_{A}(x_{2})-T_{A}(x_{0}x_{1}x_{2})1, \\
T(x) &= T_{A}(x_{0}), \\
x \sharp  y &= (x+y)^{\sharp } - x^{\sharp } - y^{\sharp }.
\end{align*}
It can be verified by a direct computation that
\begin{align*}
x\sharp y &= (x_{0}\sharp y_{0}-x_{1}y_{2}-y_{1}x_{2}, \mu^{-1}(x_{2}\sharp y_{2})-x_{0}y_{1}-y_{0}x_{1},(x_{1}\sharp y_{1})\mu-x_{2}y_{0}-y_{2}x_{0}), \\
x\sharp y &= (x_{0}\sharp y_{0}-x_{1}y_{2}-y_{1}x_{2}, (x_{2}\sharp y_{2})\mu^{-1}-x_{0}y_{1}-y_{0}x_{1},\mu(x_{1}\sharp y_{1})-x_{2}y_{0}-y_{2}x_{0}), \\
x\sharp y &= (x_{0}\sharp y_{0}-x_{1}y_{2}-y_{1}x_{2}, (x_{2}\sharp y_{2})\mu^{-1}-x_{0}y_{1}-y_{0}x_{1},(x_{1}\sharp y_{1})\mu-x_{2}y_{0}-y_{2}x_{0}),
\end{align*}
holds in  $ J_{lr}(A,\mu) $, $ J_{rl}(A,\mu) $ and $ J_{rr}(A,\mu) $, respectively. We will not pursue this approach any further in this paper.

%
\section{Some more identities} \label{sec:identities}
%

\begin{lemma}
Let $ x = (x_{0},x_{1},x_{2}) $, $ y = (y_{0},y_{1},y_{2}), z = (z_{0},z_{1},z_{2}) \in J(A,\mu) $ be such that one of $ x_{1}, y_{1}, z_{1} $ is equal to zero and one of $ x_{2}, y_{2}, z_{2} $ is equal to zero. Then $ T(x \times y, z) = T(x, y \times z) $.
\end{lemma}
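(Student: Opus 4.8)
The plan is to reduce the identity $T(x\times y,z)=T(x,y\times z)$ to an identity over the base algebra $A$ by writing out both sides componentwise using the explicit formula for the sharp map on $J(A,\mu)$ together with the definition of the bilinear trace form $T(u,v)=T_A(u_0v_0)+T_A(u_1v_2)+T_A(u_2v_1)$. Recall $x\times y=\frac12(x\sharp y)$, and
\[
x\sharp y = (x_0\sharp y_0-x_1y_2-y_1x_2,\ \mu^{-1}(x_2\sharp y_2)-x_0y_1-y_0x_1,\ \mu(x_1\sharp y_1)-x_2y_0-y_2x_0).
\]
So up to the harmless factor $\tfrac12$, $T(x\times y,z)$ is the sum of three $T_A$-terms: $T_A\big((x_0\sharp y_0-x_1y_2-y_1x_2)z_0\big)$, $T_A\big((\mu^{-1}(x_2\sharp y_2)-x_0y_1-y_0x_1)z_2\big)$, and $T_A\big((\mu(x_1\sharp y_1)-x_2y_0-y_2x_0)z_1\big)$. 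Expanding $T(x,y\times z)$ analogously gives another nine $T_A$-terms. The goal is to match them up.

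The key tools are: the symmetry $T_A(uv)=T_A(vu)$ from \eqref{Tr(xy) = Tr(yx)}; the cyclicity of $T_A$ on triple products, i.e. $T_A(uvw)=T_A(vwu)=T_A(wuv)$, which follows from combining \eqref{Trace-product formula}/\eqref{Tr = Tr} (or can be taken from the associativity of $A$ together with \eqref{Tr(xy) = Tr(yx)}); and the ``associativity'' of the trilinear form $N_A$ paired against the sharp map, namely $T_A((u\sharp v)w)=T_A(u(v\sharp w))$, which is the polarized form of the trace-sharp relation \eqref{Trace-sharp formula} and holds in any cubic algebra — this handles exactly the terms carrying $x_0\sharp y_0$, $x_1\sharp y_1$, $x_2\sharp y_2$. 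First I would dispose of the three ``$\sharp$'' terms on each side using this last identity (on the $0$-components it is immediate; on the $1$- and $2$-components one also uses that $\mu$ and $\mu^{-1}$ sit inside a single $T_A$ and can be moved by cyclicity). The remaining terms on each side are pure triple products in the $x_i,y_i,z_i$ (no $\mu$), six per side.

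Here is where the hypothesis enters: one of $x_1,y_1,z_1$ vanishes and one of $x_2,y_2,z_2$ vanishes. Without the hypothesis, the leftover six terms on the left and the six on the right do not individually coincide — they differ by terms that only cancel in pairs when one factor from the ``$1$-slot'' and one from the ``$2$-slot'' is zero, which kills the problematic cross terms. Concretely, after using cyclicity to bring the triple-product terms into a canonical cyclic order, each leftover term on the left is of the form $\pm T_A(x_ay_bz_c)$ with $\{a,b,c\}$ a permutation of a fixed pattern; the hypothesis forces enough of these to be zero that what survives on the left agrees term-by-term with what survives on the right. I would organize this as a short case analysis over which of the nine sub-cases (three choices of the vanishing $1$-index times three choices of the vanishing $2$-index) occurs, though in practice several cases are handled uniformly.

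**Main obstacle.** The genuinely delicate point is the bookkeeping of signs and cyclic orderings among the twelve residual triple-product $T_A$-terms, and verifying that the vanishing hypothesis is exactly what is needed to collapse the mismatch — i.e. confirming that no weaker hypothesis suffices and that the stated one does. Everything else is a routine expansion using \eqref{Tr(xy) = Tr(yx)}, cyclicity of $T_A$ on triples, and the polarized trace-sharp identity; the care lies in not losing track of a sign or a cyclic shift when commuting $\mu^{\pm1}$ past the other factors inside $T_A$.
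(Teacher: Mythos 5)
Your overall plan (expand both sides into nine $T_A$-terms each and match them) is the same as the paper's, but you have located the role of the hypothesis in exactly the wrong place, and the step where you actually do need it is the one you claim goes through unconditionally. The six residual triple-product terms match \emph{without any hypothesis}: after expanding, the left side contributes
$-T_A(x_1y_2z_0)-T_A(y_1x_2z_0)-T_A(x_0y_1z_2)-T_A(y_0x_1z_2)-T_A(x_2y_0z_1)-T_A(y_2x_0z_1)$
and the right side contributes
$-T_A(x_0y_1z_2)-T_A(x_0z_1y_2)-T_A(x_1y_2z_0)-T_A(x_1z_2y_0)-T_A(x_2y_0z_1)-T_A(x_2z_0y_1)$,
and these pair off term by term using only the cyclicity $T_A(uvw)=T_A(vwu)$. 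No case analysis over which indices vanish is needed there, and the ``problematic cross terms'' you describe do not exist.

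The genuine obstruction is the pair of $\sharp$-terms carrying $\mu^{\pm1}$, which you claim to dispose of via the polarized trace-sharp identity plus cyclicity. That fails: the identity $T_A((u\sharp v)w)=T_A(u(v\sharp w))$ says that the two elements $(u\sharp v)w$ and $u(v\sharp w)$ have the same trace, not that they are equal in $A$; once you insert $\mu^{-1}$ you are evaluating the linear functional $a\mapsto T_A(\mu^{-1}a)$, which does not identify two elements merely because their traces agree. So $T_A(\mu^{-1}(x_2\sharp y_2)z_2)$ and $T_A(x_2\mu^{-1}(y_2\sharp z_2))$ need not be equal, and likewise for the terms involving $\mu(x_1\sharp y_1)$ and $\mu(y_1\sharp z_1)$. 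This is precisely where the paper uses the hypothesis: if one of $x_1,y_1,z_1$ is zero, then on the left either $z_1=0$ or $x_1\sharp y_1=0$, and on the right either $x_1=0$ or $y_1\sharp z_1=0$ (recall $0\sharp v=0$ since $\sharp$ is the linearization of a quadratic map), so both $\mu$-terms vanish; the same applies to the index-$2$ terms with $\mu^{-1}$. With that correction your outline becomes the paper's proof; as written, the argument has a gap at the $\mu$-terms and an unnecessary case analysis at the triple products.
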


\begin{proof}
We find that
\begin{align}
T(x \times y,z) = &\frac{1}{2}T_{A}((x_{0} \sharp y_{0})z_{0}-x_{1}y_{2}z_{0}-y_{1}x_{2}z_{0}) \nonumber \\ &+ \frac{1}{2}T_{A}(\mu^{-1}(x_{2}\sharp y_{2})z_{2}-x_{0}y_{1}z_{2}-y_{0}x_{1}z_{2}) \label{T(x times y,z) = T(x,y times z)} \\ &+\frac{1}{2}T_{A}(\mu(x_{1}\sharp y_{1})z_{1}-x_{2}y_{0}z_{1}-y_{2}x_{0}z_{1}) \nonumber
\end{align}
and
\begin{align}
T(x,y \times z) = &\frac{1}{2}T_{A}(x_{0}(y_{0}\sharp z_{0})-x_{0}y_{1}z_{2}-x_{0}z_{1}y_{2}) \nonumber \\ &+\frac{1}{2}T_{A}(x_{1}\mu(y_{1} \sharp z_{1})-x_{1}y_{2}z_{0}-x_{1}z_{2}y_{0}) \label{T(x times y,z) = T(x,y times z) 2} \\ &+ \frac{1}{2}T_{A}(x_{2}\mu^{-1}(y_{2}\sharp z_{2})-x_{2}y_{0}z_{1}-x_{2}z_{0}y_{1}). \nonumber
\end{align}
Using the definitions, we can show that $ T_{A}((x_{0}\sharp y_{0})z_{0}) = T_{A}(x_{0}(y_{0}\sharp z_{0})) $. Also, since one of  $ x_{1}, y_{1}, z_{1} $ is equal to zero, we have that $ T_{A}(\mu(x_{1}\sharp y_{1})z_{1}) = 0 = T_{A}(x_{1}\mu(y_{1}\sharp z_{1})) $. Finally, since one of  $ x_{2}, y_{2}, z_{2} $ is equal to zero, $ T_{A}(\mu^{-1}(x_{2}\sharp y_{2})z_{2}) = 0 = T_{A}(x_{2}\mu^{-1}(y_{2}\sharp z_{2})) $. Therefore applying these equalities and using (\ref{Tr(xy) = Tr(yx)}), we deduce that (\ref{T(x times y,z) = T(x,y times z)}) and (\ref{T(x times y,z) = T(x,y times z) 2}) are equal, so $ T(x \times y,z) = T(x,y \times z) $.
\end{proof}

We know that $xx^{\sharp } = x^{\sharp }x = N(x)1$ holds for all $ x \in J(A,\mu) $ if $ \mu \in F^{\times} $.
We now show for which $ x \in J(A,\mu) $ we still obtain $xx^{\sharp } = x^{\sharp }x = N(x)1$:

\begin{lemma} \label{generalization sharp id}
Let $\mu\in A^\times$ and suppose that $ x \in J(A,\mu) $, such that one of the following holds:
\\ (i)  $ x = (x_{0},0,x_{2}) \in J(A,\mu) $, $ x_{0} \mu = \mu x_{0} $ and $ N_{A}(x_{2}) = 0 $.
\\ (ii)  $ x = (x_{0},x_{1},0) \in J(A,\mu) $, $ x_{1}\mu = \mu x_{1} $ and $ N_{A}(x_{1}) = 0 $.
\\ Then we have
\begin{equation} \label{xxhas = xhashx = N(x)}
xx^{\sharp } = x^{\sharp }x = N(x)1.
\end{equation}
Moreover, assume that one of the following holds:
\\ (iii) $ x = (x_{0},0,x_{2}) \in J(A,\mu) $, $ x_{0} \mu = \mu x_{0} $ and $ N_{A}(x_{2}) \neq 0 $.
\\ (iv) $ x = (x_{0},x_{1},0) \in J(A,\mu) $, $ x_{1}\mu = \mu x_{1} $ and $ N_{A}(x_{1}) \neq 0 $.
\\ Then $ xx^{\#} = x^{\#}x = N(x)1 $ if and only if $ \mu \in F^{\times}$.
\end{lemma}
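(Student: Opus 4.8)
The plan is to treat the two implications separately. The ``$\Leftarrow$'' direction needs nothing new: if $\mu\in F^{\times}$ then $J(A,\mu)$ is the classical first Tits construction (Section~\ref{The first Tits construction}), hence a cubic Jordan algebra, and as noted immediately before the lemma the identity $xx^{\sharp}=x^{\sharp}x=N(x)1$ then holds for \emph{every} $x\in J(A,\mu)$; indeed it drops out of the degree-three identity of that cubic Jordan algebra together with Theorem~\ref{some relations}(i) ($x^{\sharp}=x^{2}-T(x)x+S(x)1$) and the commutativity of $J(A,\mu)$. Moreover, when $\mu\in F$ the side conditions $x_{0}\mu=\mu x_{0}$ resp.\ $x_{1}\mu=\mu x_{1}$ are automatic, so the hypotheses of (iii), (iv) impose no extra constraint. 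Thus the whole content is the converse.

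For ``$\Rightarrow$'' assume $xx^{\sharp}=N(x)1$; I claim it is enough to compare the first ($A_{0}$-)components of the two sides. In case (iii), $x=(x_{0},0,x_{2})$ with $N_{A}(x_{2})\neq 0$: since $x_{1}=0$ and $0^{\sharp}=0$, the adjoint simplifies to $x^{\sharp}=(x_{0}^{\sharp},\,\mu^{-1}x_{2}^{\sharp},\,-x_{2}x_{0})$. The key observation is that the middle entry $\mu^{-1}x_{2}^{\sharp}$ need \emph{not} be zero, and it is its product with $x_{2}$ inside $xx^{\sharp}$ that creates the obstruction. Reading off the first component of $xx^{\sharp}$ from the multiplication rule, the only nonzero contributions are $x_{0}\cdot x_{0}^{\sharp}$ and $\overline{(\mu^{-1}x_{2}^{\sharp})x_{2}}$; by (\ref{degree three identity equivalent}), applied in $A$, the first is $N_{A}(x_{0})1$, and, since $x_{2}^{\sharp}x_{2}=N_{A}(x_{2})1$ is central, the second equals $\overline{N_{A}(x_{2})\mu^{-1}}=N_{A}(x_{2})\,\overline{\mu^{-1}}$ by $F$-linearity of $z\mapsto\overline{z}$. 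On the other hand $N(x)=N_{A}(x_{0})1+N_{A}(x_{2})\mu^{-1}$ because $x_{1}=0$. Equating first components and cancelling $N_{A}(x_{2})\neq 0$ gives $\overline{\mu^{-1}}=\mu^{-1}$; since $\overline{z}=\frac{1}{2}(T_{A}(z)1-z)$, this forces $3\mu^{-1}=T_{A}(\mu^{-1})1$, i.e.\ $\mu^{-1}\in F1$, hence $\mu\in F^{\times}$.

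Case (iv) is the mirror image: now $x=(x_{0},x_{1},0)$ with $N_{A}(x_{1})\neq 0$, so $x^{\sharp}=(x_{0}^{\sharp},\,-x_{0}x_{1},\,\mu x_{1}^{\sharp})$ and the first component of $xx^{\sharp}$ is $x_{0}\cdot x_{0}^{\sharp}+\overline{x_{1}(\mu x_{1}^{\sharp})}$. This time I use the hypothesis $x_{1}\mu=\mu x_{1}$ to rewrite $x_{1}\mu x_{1}^{\sharp}=\mu(x_{1}x_{1}^{\sharp})=N_{A}(x_{1})\mu$, so the first component becomes $N_{A}(x_{0})1+N_{A}(x_{1})\overline{\mu}$, whereas $N(x)=N_{A}(x_{0})1+N_{A}(x_{1})\mu$; cancelling $N_{A}(x_{1})\neq 0$ yields $\overline{\mu}=\mu$, hence $\mu\in F^{\times}$. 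There is no genuine obstacle here: the only point requiring care is not to discard the ``off-diagonal'' component of $x^{\sharp}$ merely because one of $x_{1},x_{2}$ vanishes. Once that term is retained, each direction reduces to a one-line comparison of the $A_{0}$-components of $xx^{\sharp}$ and $N(x)1$, the equivalence $\overline{z}=z\iff z\in F1$ doing the rest.
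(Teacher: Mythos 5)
Your treatment of the ``moreover'' part is correct and is essentially the paper's own argument: for (iii) and (iv) the authors likewise reduce the forward implication to the first component of $xx^{\sharp}$, obtaining $a_{0}=N_{A}(x_{0})1+\frac{1}{2}\bigl(T_{A}(\mu^{\pm1})N_{A}(x_{i})-\mu^{\pm1}N_{A}(x_{i})\bigr)$, which is exactly your $N_{A}(x_{0})1+N_{A}(x_{i})\overline{\mu^{\pm1}}$, and then cancel the nonzero norm to force $\mu^{\pm1}=\frac{1}{3}T_{A}(\mu^{\pm1})1\in F$. The reverse implication via the classical construction is also handled the same way.

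The genuine gap is that you never prove parts (i) and (ii), which are the first (unconditional) assertion of the lemma: under $N_{A}(x_{2})=0$ (resp.\ $N_{A}(x_{1})=0$) and the commutation hypothesis, $xx^{\sharp}=x^{\sharp}x=N(x)1$ actually holds. Your closing remark that ``each direction reduces to a one-line comparison of the $A_{0}$-components'' is only true for the forward implication of (iii)/(iv); for (i) and (ii) one must verify that the $A_{1}$- and $A_{2}$-components of $xx^{\sharp}$ vanish, and that is where the real work sits. Concretely, in case (i) the middle component is $\overline{x_{0}}\,\mu^{-1}x_{2}^{\sharp}-\mu^{-1}\bigl(x_{2}\times(x_{2}x_{0})\bigr)$; showing this is zero requires the commutation $x_{0}\mu=\mu x_{0}$ together with the identity $x_{2}\times(x_{2}x_{0})=\frac{1}{2}\bigl(T_{A}(x_{0})x_{2}^{\sharp}-x_{0}x_{2}^{\sharp}\bigr)$, which the paper derives by combining the linearized sharp expression with the formula $xyx=T_{A}(x,y)x-x^{\sharp}\sharp\,y$; the third component reduces to the (easier but still necessary) identity $\overline{x_{0}^{\sharp}}=x_{0}\overline{x_{0}}$, proved from $2S_{A}(x_{0})=T_{A}(x_{0})^{2}-T_{A}(x_{0}^{2})$. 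Case (ii) needs the mirror computations. None of this is a one-line component comparison, and without it half of the lemma is unproven.
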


\begin{proof}
 Let $ x = (x_{0},x_{1},x_{2})\in J(A,\mu)  $, and let $ xx^{\sharp } = (a_{0},a_{1},a_{2}) $, then
$x^{\sharp } = (x_{0}^{\sharp }-x_{1}x_{2},\mu^{-1}x_{2}^{\sharp }-x_{0}x_{1},\mu x_{1}^{\sharp }-x_{2}x_{0}).$
Thus we have
\begin{align}
a_{0} &= \frac{1}{2}(x_{0}x_{0}^{\sharp }-x_{0}x_{1}x_{2}+x_{0}^{\sharp }x_{0}-x_{1}x_{2}x_{0}) \nonumber \\ &\hspace{0.3cm} +\frac{1}{2}(T_{A}(x_{1}\mu x_{1}^{\sharp }-x_{1}x_{2}x_{0})1-x_{1}\mu x_{1}^{\sharp }+x_{1}x_{2}x_{0}) \label{a0 equation} \\ &\hspace{0.3cm} +\frac{1}{2}(T_{A}(\mu^{-1}x_{2}^{\sharp }x_{2}-x_{0}x_{1}x_{2})1-\mu^{-1}x_{2}^{\sharp }x_{2}+x_{0}x_{1}x_{2}), \nonumber
\end{align}
\begin{align}
a_{1} &= \frac{1}{2}(T_{A}(x_{0})-x_{0})(\mu^{-1}x_{2}^{\sharp }-x_{0}x_{1})+\frac{1}{2}(T_{A}({x_{0}^{\sharp }-x_{1}x_{2}})-x_{0}^{\sharp }+x_{1}x_{2})x_{1} \nonumber \\ & \hspace{0.3cm} + \mu^{-1}(x_{2} \times (\mu x_{1}^{\sharp }-x_{2}x_{0})) \label{a1 equation},
\end{align}
\begin{align}
a_{2} &= \frac{1}{2}x_{2}(T_{A}(x_{0}^{\sharp}-x_{1}x_{2})-x_{0}^{\sharp}+x_{1}x_{2}) +\frac{1}{2}(\mu x_{1}^{\sharp}-x_{2}x_{0})(T_{A}(x_{0})-x_{0}) \nonumber \\  & \hspace{0.3cm} + \mu(x_{1} \times (\mu^{-1}x_{2}^{\sharp}-x_{0}x_{1})) \label{a2 equation}
\end{align}
by the definition of the multiplication on $ J(A,\mu) $.
\\ (i) If $ x_{1} = 0 $ and $ N_{A}(x_{2}) = 0 $, using the fact that $ x_{i}x_{i}^{\sharp } = x_{i}^{\sharp }x_{i} = N_{A}(x_{i})1 $ for all $ i = 0,1,2 $ (see (\ref{degree three  identity equivalent})), (\ref{a0 equation}) simplifies to  $ a_{0} = N_{A}(x_{0})1 = N(x) $. Since we have $ x_{0} \mu^{-1} = \mu^{-1}x_{0} $,  (\ref{a1 equation}) gives that
$
a_{1} = \frac{1}{2}\mu^{-1}(\operatorname{T}_{A}(x_{0})-x_{0})x_{2}^{\sharp} - \mu^{-1}(x_{2} \times (x_{2}x_{0})). \label{a1 equation2}
$
By Lemma \ref{sharp alternative},
\begin{equation} \label{equation that I need}
x_{2}^{\sharp }\sharp  x_{0} = x_{2}^{\sharp }x_{0}+x_{0}x_{2}^{\sharp }-T_{A}(x_{2}^{\sharp })x_{0}-T_{A}(x_{0})x_{2}^{\sharp }+(T_{A}(x_{2}^{\sharp })T_{A}(x_{0})-T_{A}(x_{2}^{\sharp }x_{0}))1.
\end{equation}
Using the fact that $ T_{A}(x_{2}^{\sharp }) = S_{A}(x_{2}) $ (by (\ref{use it twice})) on the right-hand side of (\ref{equation that I need}), we further obtain after some simplification that
\begin{equation} \label{i will need this}
x_{2}^{\sharp }\sharp  x_{0} = x_{2}^{2}x_{0}-T_{A}(x_{2})x_{2}x_{0}+x_{0}x_{2}^{\sharp }-T_{A}(x_{0})x_{2}^{\sharp }-T_{A}(x_{2}^{2}x_{0})1+T_{A}(x_{2}x_{0})1.
\end{equation}
Now combining (\ref{McCrimmon xyx equation}) 
 with (\ref{i will need this}) yields
$
T_{A}(x_{0})x_{2}^{\sharp }-x_{0}x_{2}^{\sharp } =
x_{2}^{2}x_{0}+x_{2}x_{0}x_{2}-T_{A}(x_{2})x_{2}x_{0}-T_{A}(x_{2}x_{0})x_{2}+(T_{A}(x_{2})T_{A}(x_{2}x_{0})-T_{A}(x_{2}^{2}x_{0}))1
=2(x_{2} \times (x_{2}x_{0})) $, so $ x_{2} \times (x_{2}x_{0}) = \frac{1}{2}(T_{A}(x_{0})x_{2}^{\sharp}-x_{0}x_{2}^{\sharp }) $. Hence (\ref{a1 equation2}) implies $ a_{1} = 0 $. For $ a_{2} $, (\ref{a2 equation}) yields
$
a_{2} = \frac{1}{2}x_{2}(T_{A}(x_{0}^{\sharp})-x_{0}^{\sharp})-\frac{1}{2}x_{2}(T_{A}(x_{0})x_{0}-x_{0}^{2}).
$
Then using the definition of $ x_{0}^{\sharp} $ and the fact that $ 2S_{A}(x_{0}) = \operatorname{T}_{A}(x_{0})^{2}-\operatorname{T}_{A}(x_{0}^{2}) $, we find that $ \operatorname{T}_{A}(x_{0}^{\sharp})-x_{0}^{\sharp} = \operatorname{T}_{A}(x_{0})x_{0}-x_{0}^{2} $. Therefore, $ a_{2} = 0 $.
\\ (ii) In this case we have $ x_{2} = 0 $, $ x_{1}\mu = \mu x_{1} $ and $ N_{A}(x_{1}) = 0 $. So  (\ref{a0 equation}) simplifies to $ a_{0} = N_{A}(x_{0})1 = N(x) $. For $ a_{1} $, (\ref{a1 equation}) simplifies to
$
a_{1} = -\frac{1}{2}(\operatorname{T}_{A}(x_{0})x_{0}-x_{0}^{2})x_{1}+\frac{1}{2}(\operatorname{T}_{A}(x_{0}^{\sharp})-x_{0}^{\sharp})x_{1}.
$
Then in a similar way to how we found $ a_{2} $ in (i), we find here that $ a_{1} = 0 $. For $ a_{2} $, (\ref{a2 equation}) simplifies to
$
a_{2} = \frac{1}{2}\mu x_{1}^{\sharp}(T_{A}(x_{0})-x_{0}) - \mu(x_{1} \times (x_{0}x_{1})).
$
We now find in a similar way to how we found $ a_{1} $ in (i) that $ a_{2} = 0 $.
\\
 To prove that the claimed equivalence holds assuming (iii) or (iv), we only need to show the forward direction since we know from the classical first Tits construction that the reverse direction holds:
\\
(iii) Here, (\ref{a0 equation}) yields
$
a_{0} = N_{A}(x_{0})1+\frac{1}{2}(\operatorname{T}_{A}(\mu^{-1})N_{A}(x_{2})-\mu^{-1}N_{A}(x_{2})),
$
thus  $ xx^{\sharp} = N(x)1 = (N_{A}(x_{0})1+\mu^{-1}N_{A}(x_{2}))1 $ gives that
$
N_{A}(x_{0})1+\frac{1}{2}(\operatorname{T}_{A}(\mu^{-1})N_{A}(x_{2})-\mu^{-1}N_{A}(x_{2})) = a_{0} = N_{A}(x_{0})1+\mu^{-1}N_{A}(x_{2}).
$
Therefore we have $ \mu^{-1} = \frac{1}{3} \operatorname{T}_{A}(\mu^{-1}) \in F^{\times} $, so $ \mu \in F^{\times} $.
\\ (iv) In this case, (\ref{a0 equation}) yields
$
a_{0} = N_{A}(x_{0})1 + \frac{1}{2}(\operatorname{T}_{A}(\mu)N_{A}(x_{1})-\mu N_{A}(x_{1})),
$
thus  $ xx^{\sharp} = N(x)1 = (N_{A}(x_{0})1+\mu N_{A}(x_{1}))1 $ yields
$
N_{A}(x_{0})1 + \frac{1}{2}(\operatorname{T}_{A}(\mu)N_{A}(x_{1})-\mu N_{A}(x_{1})) = a_{0} = N_{A}(x_{0})1+\mu N_{A}(x_{1}).
$
Therefore we obtain $ \mu = \frac{1}{3}\operatorname{T}_{A}(\mu) \in F^{\times} $.
The proof that $ x^{\sharp }x = N(x)1 $ is done similarly.
\end{proof}

\begin{corollary}
Let $\mu\in A^\times$. Suppose that $ x \in J(A,\mu) $ satisfies $N(x)\not=0$, and assume that one of the following holds:
\\ (i)  $ x = (x_{0},x_{1},0) \in J(A,\mu) $, $ x_{1}\mu = \mu x_{1} $ and $ N_{A}(x_{1}) = 0 $.
\\ (ii)  $ x = (x_{0},0,x_{2}) \in J(A,\mu) $, $ x_{0} \mu = \mu x_{0} $ and $ N_{A}(x_{2}) = 0 $.
\\ Then $x$ is invertible in $J(A,\mu) $ with $x^{-1}=  N(x)^{-1}x^{\sharp } $.
\end{corollary}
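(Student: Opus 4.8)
The plan is to reduce the statement directly to Lemma~\ref{generalization sharp id}. In case (i) of the corollary the element $x = (x_0,x_1,0)$ satisfies precisely the hypotheses of part (ii) of that lemma, and in case (ii) the element $x = (x_0,0,x_2)$ satisfies precisely the hypotheses of part (i). Hence in either case the lemma yields $xx^{\sharp} = x^{\sharp}x = N(x)1$, and the whole task is to turn this into an inverse formula.

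The one point that needs a small observation is that under these hypotheses $N(x)$ is actually a scalar in $F$, not merely an element of $A$. Indeed, from $N((x_0,x_1,x_2)) = N_A(x_0) + \mu N_A(x_1) + \mu^{-1}N_A(x_2) - T_A(x_0x_1x_2)$, the assumption $N_A(x_1)=0$ (resp. $N_A(x_2)=0$) kills the $\mu$-term (resp. the $\mu^{-1}$-term), while the mixed term $T_A(x_0x_1x_2)$ vanishes because one of $x_1,x_2$ is zero; so $N(x) = N_A(x_0) \in F$. Since $N(x)\neq 0$, it is invertible in $F$.

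Finally, since elements of $F$ commute with the bilinear multiplication of $J(A,\mu)$, I can pull $N(x)^{-1}$ through the products from Lemma~\ref{generalization sharp id}: one gets $x\cdot\bigl(N(x)^{-1}x^{\sharp}\bigr) = N(x)^{-1}(xx^{\sharp}) = N(x)^{-1}N(x)\,1 = 1$ and, in the same way, $\bigl(N(x)^{-1}x^{\sharp}\bigr)\cdot x = N(x)^{-1}(x^{\sharp}x) = 1$. Thus $N(x)^{-1}x^{\sharp}$ is a two-sided inverse of $x$, which gives $x^{-1} = N(x)^{-1}x^{\sharp}$.

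I do not expect a genuine obstacle: all the work sits in Lemma~\ref{generalization sharp id}, and the only subtlety is noticing that $N(x)$ lands in $F$, so that division by it is unambiguous in the nonassociative algebra and no embedding of $A$ into $J(A,\mu)$ is needed. (Were $N_A(x_1)$ or $N_A(x_2)$ allowed to be nonzero, $N(x)$ would typically lie in $A\setminus F$, and one would have to interpret $N(x)^{-1}x^{\sharp}$ via the copy $A_0\subset J(A,\mu)$ — but that situation is excluded by the hypotheses, so this difficulty does not arise here.)
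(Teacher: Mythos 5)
Your proposal is correct and follows the same route as the paper: apply Lemma \ref{generalization sharp id} (your matching of the corollary's cases to the lemma's parts is right) and then divide by the central scalar $N(x)$. Your explicit observation that the hypotheses force $N(x)=N_A(x_0)\in F$, so that $N(x)1$ is a genuine scalar multiple of the identity, is exactly the point the paper compresses into ``Since $F=C(J(A,\mu))$ this yields the assertion.''
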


\begin{proof}
Let $\mu\in A^\times$ and suppose that $ x \in J(A,\mu) $ satisfies (i) or (ii), then $xx^{\sharp } = x^{\sharp }x = N(x)1.$
Since $F=C(J(A,\mu))$ this yields the assertion.
\end{proof}

In particular, if $N$ is anisotropic, then every $0 \neq x  x = (x_{0},x_{1},0) \in J(A,\mu) $ in (i) or (ii) is of the type
$ x = (x_{0},0,0) \in J(A,\mu) $, i.e. lies in $A$, so this result becomes trivial then.

\begin{corollary}
Let $\mu\in A^\times$ and suppose that $ x \in J(A,\mu) $, such that one of the following holds:
\\ (i)  $ x = (x_{0},0,x_{2}) \in J(A,\mu) $, $ x_{0} \mu = \mu x_{0} $ and $ N_{A}(x_{2}) = 0 $.
\\ (ii)  $ x = (x_{0},x_{1},0) \in J(A,\mu) $, $ x_{1}\mu = \mu x_{1} $ and $ N_{A}(x_{1}) = 0 $.
\\ Then we have
\begin{equation*}
x^{3} - T(x)x^{2}+S(x)x-N(x)1 = 0.
\end{equation*}
\end{corollary}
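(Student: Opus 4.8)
The plan is to derive this directly from two facts already in hand: the adjoint expansion $x^{\sharp} = x^{2} - T(x)x + S(x)1$ of Theorem~\ref{some relations}(i), together with the identity $xx^{\sharp} = x^{\sharp}x = N(x)1$, which under hypothesis (i) or (ii) is precisely the content of Lemma~\ref{generalization sharp id}(i)--(ii).

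First I would rewrite Theorem~\ref{some relations}(i) as $x^{2} = x^{\sharp} + T(x)x - S(x)1$. Since the left multiplication $L_{x}$ is $F$-linear and $x\cdot 1 = x$, applying $L_{x}$ to both sides gives $x^{3} = x\cdot x^{2} = xx^{\sharp} + T(x)x^{2} - S(x)x$, where $x^{3}$ denotes $x\cdot x^{2}$. One should note here that $J(A,\mu)$ is commutative: interchanging the two arguments in the defining product leaves it unchanged, because the Jordan product $\cdot$ on $A$ is commutative and $\times$ is symmetric. Hence $x\cdot x^{2} = x^{2}\cdot x$ and there is no ambiguity in the symbol $x^{3}$. (Alternatively, one fixes $x^{3} := x\cdot x^{2}$ and avoids discussing commutativity, applying $L_{x}$ and the half $xx^{\sharp} = N(x)1$; or $x^{3} := x^{2}\cdot x$ with $R_{x}$ and $x^{\sharp}x = N(x)1$.)

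Finally I invoke Lemma~\ref{generalization sharp id}: under hypothesis (i) or (ii) we have $xx^{\sharp} = N(x)1$. Substituting into the previous display yields $x^{3} = N(x)1 + T(x)x^{2} - S(x)x$, which rearranges to $x^{3} - T(x)x^{2} + S(x)x - N(x)1 = 0$, as claimed. There is essentially no obstacle: the corollary is a formal consequence of Theorem~\ref{some relations}(i) and Lemma~\ref{generalization sharp id}, the only subtlety being the (easily dispatched) well-definedness of $x^{3}$ discussed above.
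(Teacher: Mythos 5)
Your proposal is correct and follows the paper's own argument: both derive the degree-three identity by multiplying the expansion $x^{\sharp}=x^{2}-T(x)x+S(x)1$ of Theorem \ref{some relations}(i) by $x$ and then invoking Lemma \ref{generalization sharp id} to replace $xx^{\sharp}$ by $N(x)1$. Your added remark on the commutativity of $J(A,\mu)$ (so that $x^{3}$ is unambiguous) is a harmless and valid refinement of the same proof.
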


\begin{proof}
Using the fact that $ x^{\sharp} = x^{2}-T(x)x+S(x)1 $ from Lemma \ref{some relations} (i), we have that $ x^{3} - T(x)x^{2}+S(x)x-N(x)1 = 0 $ if and only if $ xx^{\sharp} = x^{\sharp}x = N(x)1 $. Thus the result now follows as a consequence of Lemma \ref{generalization sharp id}.
\end{proof}

\begin{theorem}
The identity $ xx^{\sharp } = x^{\sharp }x = N(x)1 $  
 holds for all $ x \in J(A,\mu) $ if and only if $ \mu \in F^{\times} $.
\end{theorem}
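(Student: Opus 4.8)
The plan is to obtain this statement as an immediate consequence of Lemma \ref{generalization sharp id}, in which the relevant computation has essentially already been carried out; the only thing left is to pick a suitable test element.

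For the implication ``$\mu\in F^\times \Rightarrow$ the identity holds'': when $\mu\in F^\times$, the algebra $J(A,\mu)$ is precisely the classical first Tits construction of Section \ref{The first Tits construction}. By the Remark there, $(N,\sharp,1)$ is a cubic form with adjoint and base point $(1,0,0)$ making $J(A,\mu)$ into a cubic Jordan algebra, and such a cubic norm structure satisfies $xx^{\sharp}=x^{\sharp}x=N(x)1$ for all $x$, the exact analogue of (\ref{degree three  identity equivalent}). Alternatively one may quote the classical theory underlying Theorem \ref{division algebra J(A,mu)}.

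For the converse, suppose $xx^{\sharp}=x^{\sharp}x=N(x)1$ holds for every $x\in J(A,\mu)$. If $A=F$ there is nothing to prove, since then $A^\times=F^\times$; so assume $A\neq F$ and test the assumed identity on the element $x=(1,0,1)\in J(A,\mu)$. Its first slot $1$ commutes with $\mu$, and its last slot $1$ satisfies $N_A(1)=1\neq 0$ (recall $3N_A(1)=N_A(1;1)=3$). Hence $x$ falls under hypothesis (iii) of Lemma \ref{generalization sharp id}, and the equivalence stated there — that $xx^{\sharp}=x^{\sharp}x=N(x)1$ holds for such an $x$ if and only if $\mu\in F^\times$ — forces $\mu\in F^\times$, as desired.

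There is essentially no obstacle here: all the substantive work is already contained in Lemma \ref{generalization sharp id}. The only (minor) point requiring attention is to exhibit a single element of $J(A,\mu)$ that simultaneously meets one of the hypotheses (iii)/(iv) of that lemma and is available for an arbitrary $A$; the element $x=(1,0,1)$ does this, and one could equally well use $x=(1,1,0)$ together with hypothesis (iv).
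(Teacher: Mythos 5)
Your proof is correct. The forward direction is the classical fact, exactly as in the paper. For the converse, the paper performs a fresh direct computation: it tests the identity on $x=(0,1,0)$, computes $x^{\sharp}=(0,0,\mu)$ and $xx^{\sharp}=(\bar{\mu},0,0)$ while $N(x)=\mu$, and concludes $\mu=\tfrac{1}{3}T_{A}(\mu)1\in F^{\times}$. You instead test on $x=(1,0,1)$ and delegate the computation to the equivalence already established in Lemma \ref{generalization sharp id}(iii), whose hypotheses ($x_{0}=1$ commutes with $\mu$, $N_{A}(x_{2})=N_{A}(1)=1\neq 0$) are indeed satisfied; note that the paper's own test element $(0,1,0)$ would equally fall under part (iv) of that lemma, so the two arguments rest on the same underlying mechanism of forcing $\mu$ (or $\mu^{-1}$) to equal one third of its trace. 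Your route is slightly more economical in that it reuses work already done, at the cost of depending on the lemma being stated before the theorem; the paper's version is self-contained. Your case split on $A=F$ is harmless but unnecessary, since $(1,0,1)$ works for every $A$.
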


\begin{proof}
If $ \mu \in F^{\times} $, then
$xx^{\sharp } = x^{\sharp }x = (N(x),0,0)$ for all $ x \in J(A,\mu) $. 
Conversely, suppose that $ xx^{\sharp } = x^{\sharp }x = (N(x),0,0) $ holds for all $ x \in J(A,\mu) $. Take $ x = (0,1,0) $. Then $ x^{\sharp } = (0,0,\mu) $, and so
\begin{equation*}
xx^{\sharp } = (\bar{\mu},0,0) = (\frac{1}{2}(T_{A}(\mu)1-\mu),0,0).
\end{equation*}
We also know that by definition, $ N(x) = \mu N_{A}(1) = \mu $, so the condition $ xx^{\sharp } = (N(x),0,0) $ gives that $ \mu = \frac{1}{2}(T_{A}(\mu)1-\mu) $. Hence
$\mu = \frac{1}{3}T_{A}(\mu)1 \in F^{\times}.$
\end{proof}

A similar argument also shows that $ xx^{\sharp } = x^{\sharp }x = N(x)1 $ only holds for $x\in A$.

\ignore{
your argument below only shows:
If $A=K$ is a cyclic field extension of degree three  and if  $(x^\sharp)^\sharp=N(x)$
holds for all $ x  \in J(A,\mu) $, then it follows that  $ \mu \in C(A)=K $. So $ \mu \in K^{\times} $, can we make stronger?
}

We know that the adjoint identity $ (x^{\#})^{\#} = N(x)x$ 
holds for all $ x \in J(A,\mu) $, if $ \mu \in F^{\times} $ \cite[Chapter C.4]{McC}. In the general construction, it holds only in very special cases:

\begin{lemma}  \label{adjoint in J(A,mu)}
Let $\mu\in A^\times$ and suppose that $ x \in J(A,\mu) $, such that one of the following holds:
\\ (i)  $ x = (0,x_{1},0) \in J(A,\mu) $ and $ N_{A}(x_{1}) = 0 $.
\\ (ii)  $ x = (x_{0},x_{1},0) \in J(A,\mu) $ and $ N_{A}(x_{1}) = 0 $ and $ x_{1}\mu = \mu x_{1} $.
\\ (iii)  $ x = (x_{0},0,x_{2}) \in J(A,\mu) $ and $ N_{A}(x_{2}) = 0 $ and $ x_{0}\mu = \mu x_{0} $.
\\ Then we have $(x^{\#})^{\#} = N(x)x$. 
\\ Moreover, if one of the following holds:
 \\ (iv) $ x = (x_{0},x_{1},0) \in J(A,\mu) $, $ N_{A}(x_{1}) \neq 0 $, $ x_{0}\mu = \mu x_{0} $ and $ x_{1}\mu = \mu x_{1} $.
 \\ (v) $ x = (x_{0},0,x_{2}) \in J(A,\mu) $, $ N_{A}(x_{2}) \neq 0 $, $ x_{0}\mu = \mu x_{0} $ and $ x_{2}\mu = \mu x_{2} $.
 \\ Then $(x^{\#})^{\#} = N(x)x$ for all $x\in J(A,\mu)$ if and only if $ N_{A}(\mu) = \mu^{3} $.
\end{lemma}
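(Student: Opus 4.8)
The plan is to prove all of (i)--(v) by a single direct computation: expand $(x^{\sharp})^{\sharp}$ componentwise from the defining formula for the adjoint on $J(A,\mu)$, and match it against $N(x)x$, using only the three standard identities of the degree-three algebra $A$ — the adjoint identity $(a^{\sharp})^{\sharp}=N_A(a)a$ in $A^{+}$, the multiplicativity $(ab)^{\sharp}=b^{\sharp}a^{\sharp}$ from $(\ref{xy hash = y hash x hash})$, and $aa^{\sharp}=a^{\sharp}a=N_A(a)1$. Writing $x^{\sharp}=(y_0,y_1,y_2)$ with $y_0=x_0^{\sharp}-x_1x_2$, $y_1=\mu^{-1}x_2^{\sharp}-x_0x_1$, $y_2=\mu x_1^{\sharp}-x_2x_0$, applying the adjoint once more gives
\[
(x^{\sharp})^{\sharp}=\big(\,y_0^{\sharp}-y_1y_2,\ \ \mu^{-1}y_2^{\sharp}-y_0y_1,\ \ \mu y_1^{\sharp}-y_2y_0\,\big).
\]
In every case (i)--(v) one of $x_1,x_2$ is zero, which annihilates most of the cross terms; moreover $N(x)$ then collapses to an element that commutes with all the $x_i$ (in (i)--(iii) it even lies in $F$), so $N(x)x$ is unambiguous. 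The one auxiliary remark I would isolate first is that, since $a^{\sharp}=a^{2}-T_A(a)a+S_A(a)1$ is an $F$-polynomial in $a$, the hypothesis $x_i\mu=\mu x_i$ forces $x_i^{\sharp}$ to commute with $\mu$, hence with $\mu^{-1}$, with $\mu^{\sharp}$, and with $(\mu^{-1})^{\sharp}=N_A(\mu)^{-1}\mu$; this is exactly where the commutativity hypotheses in (ii)--(v) are used.

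For (i)--(iii) I would run the three cases in turn. In (i), $x=(0,x_1,0)$ gives $x^{\sharp}=(0,0,\mu x_1^{\sharp})$, so $(x^{\sharp})^{\sharp}=\big(0,\ \mu^{-1}(x_1^{\sharp})^{\sharp}\mu^{\sharp},\ 0\big)$, and both $(x_1^{\sharp})^{\sharp}=N_A(x_1)x_1$ and $N(x)=\mu N_A(x_1)$ vanish since $N_A(x_1)=0$. In (ii), $x=(x_0,x_1,0)$ gives $x^{\sharp}=(x_0^{\sharp},-x_0x_1,\mu x_1^{\sharp})$ and $N(x)=N_A(x_0)\in F$; the first component of $(x^{\sharp})^{\sharp}$ is $(x_0^{\sharp})^{\sharp}+x_0x_1\mu x_1^{\sharp}=N_A(x_0)x_0+x_0\mu(x_1x_1^{\sharp})$ after moving $\mu$ past $x_1$, and $x_1x_1^{\sharp}=N_A(x_1)1=0$; the second component reduces to $N_A(x_0)x_1$ and the third to $0$ by the same ingredients together with $(x_0x_1)^{\sharp}=x_1^{\sharp}x_0^{\sharp}$. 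Case (iii) is the mirror image of (ii), with $x_1\leftrightarrow x_2$ and $\mu\leftrightarrow\mu^{-1}$; here $x_0\mu=\mu x_0$ is what lets one slide $x_0^{\sharp}$ past $\mu^{-1}$ so that the middle component cancels, and $N_A(x_2)=0$ kills the remaining terms.

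For the ``moreover'' part I would first record the clean equivalence
\[
N_A(\mu)1=\mu^{3}\quad\Longleftrightarrow\quad \mu^{\sharp}=\mu^{2},
\]
which is immediate: if $\mu^{\sharp}=\mu^{2}$ then $N_A(\mu)1=\mu\mu^{\sharp}=\mu^{3}$, and conversely substituting $\mu^{3}=N_A(\mu)1$ into the degree-three identity for $\mu$ gives $T_A(\mu)\mu^{2}=S_A(\mu)\mu$, hence $T_A(\mu)\mu=S_A(\mu)1$ after multiplying by $\mu^{-1}$, hence $\mu^{\sharp}=\mu^{2}$; equivalently this says $\mu^{-1}\mu^{\sharp}=\mu$ and $N_A(\mu)^{-1}\mu^{2}=\mu^{-1}$. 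In case (iv) we are in the situation of (ii) but with $x_1$ invertible in $A$ (as $N_A(x_1)\neq0$); using both commuting hypotheses, the first and third components of $(x^{\sharp})^{\sharp}$ come out equal to $N(x)x_0$ and $0$ unconditionally, while the second equals $N_A(x_1)\,\mu^{-1}\mu^{\sharp}x_1+N_A(x_0)x_1$, which agrees with $N(x)x_1=N_A(x_0)x_1+N_A(x_1)\mu x_1$ if and only if $\mu^{-1}\mu^{\sharp}x_1=\mu x_1$, i.e., cancelling the invertible $x_1$, $\mu^{\sharp}=\mu^{2}$. Case (v) is parallel; there the obstruction surfaces in the third component, which works out to $N_A(x_2)N_A(\mu)^{-1}\mu^{2}x_2+N_A(x_0)x_2$ against $N(x)x_2=N_A(x_0)x_2+N_A(x_2)\mu^{-1}x_2$, forcing $N_A(\mu)^{-1}\mu^{2}=\mu^{-1}$, which is again $N_A(\mu)1=\mu^{3}$.

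The work is essentially bookkeeping, and that is where I expect the only real difficulty: since $A$ is noncommutative one must track carefully which side of $\mu^{\pm1}$, $\mu^{\sharp}$ and $(\mu^{-1})^{\sharp}$ each factor sits on, and invoke each of the hypotheses $x_0\mu=\mu x_0$, $x_1\mu=\mu x_1$ (resp.\ $x_2\mu=\mu x_2$) precisely in the component where it is needed. The two genuinely conceptual points — the equivalence $N_A(\mu)=\mu^{3}\Leftrightarrow\mu^{\sharp}=\mu^{2}$ and the observation that $x_i^{\sharp}$ is a polynomial in $x_i$ — are both one-line remarks, so beyond the computations I anticipate no serious obstacle.
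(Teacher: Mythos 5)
Your proposal is correct and follows essentially the same route as the paper: a direct componentwise expansion of $(x^{\sharp})^{\sharp}$ using $(ab)^{\sharp}=b^{\sharp}a^{\sharp}$, $aa^{\sharp}=N_A(a)1$, the adjoint identity in $A$, and the observation that $x_i^{\sharp}$ commutes with $\mu$ whenever $x_i$ does. Your explicit verification of the converse direction of $N_A(\mu)1=\mu^{3}\Leftrightarrow\mu^{\sharp}=\mu^{2}$ via the degree-three identity is a welcome small addition that the paper's proof leaves implicit.
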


\begin{proof}
Let $ x = (x_{0},x_{1},x_{2}) \in J(A,\mu) $ and $ (x^{\sharp })^{\sharp } = (a_{0},a_{1},a_{2}) $. By definition,
$
x^{\sharp } = (x_{0}^{\sharp }-x_{1}x_{2},\mu^{-1}x_{2}^{\sharp }-x_{0}x_{1},\mu x_{1}^{\sharp }-x_{2}x_{0}),
$
so $ a_{0} = (x_{0}^{\sharp }-x_{1}x_{2})^{\sharp }-(\mu^{-1} x_{2}^{\sharp }-x_{0}x_{1})(\mu x_{1}^{\sharp }-x_{2}x_{0}) $. Now using (\ref{S(x) = Tr - Tr}) and Lemma \ref{sharp alternative}, it is easy to show that
\begin{align*}
(x_{0}^{\sharp }-x_{1}x_{2})^{\sharp } &= (x_{0}^{\sharp }-x_{1}x_{2})^{2} - T_{A}(x_{0}^{\sharp }-x_{1}x_{2})(x_{0}^{\sharp }-x_{1}x_{2})+S_{A}(x_{0}^{\sharp }-x_{1}x_{2}) \\ &= (x_{0}^{\sharp })^{\sharp }-x_{0}^{\sharp }\sharp  (x_{1}x_{2})+(x_{1}x_{2})^{\sharp }.
\end{align*}
Hence
\begin{align}
a_{0} &= (x_{0}^{\sharp }-x_{1}x_{2})^{\sharp }-(\mu^{-1} x_{2}^{\sharp }-x_{0}x_{1})(\mu x_{1}^{\sharp }-x_{2}x_{0}) \nonumber \\ &= (x_{0}^{\sharp })^{\sharp }-x_{0}^{\sharp }\sharp  (x_{1}x_{2})+ (x_{1}x_{2})^{\sharp } - \mu^{-1}x_{2}^{\sharp}\mu x_{1}^{\sharp }+\mu^{-1}x_{2}^{\sharp }x_{2}x_{0}+x_{0}x_{1}\mu x_{1}^{\sharp } - x_{0}x_{1}x_{2}x_{0}. \label{a0 equation adjoint}
\end{align}
Similarly, we find that
\begin{align}
a_{1} &= \mu^{-1}(\mu x_{1}^{\sharp}-x_{2}x_{0})^{\sharp}-(x_{0}^{\sharp}-x_{1}x_{2})(\mu^{-1}x_{2}^{\sharp}-x_{0}x_{1}) \nonumber \\ &= \mu^{-1}((\mu x_{1}^{\sharp})^{\sharp}-(\mu x_{1}^{\sharp})\#(x_{2}x_{0})+(x_{2}x_{0})^{\sharp}) \nonumber \\ & \hspace{0.3cm} -x_{0}^{\sharp}\mu^{-1}x_{2}^{\sharp} +x_{0}^{\sharp}x_{0}x_{1}+x_{1}x_{2}\mu^{-1}x_{2}^{\sharp}-x_{1}x_{2}x_{0}x_{1} \label{a1 equation adjoint}
\end{align}
and
\begin{align}
a_{2} &= \mu(\mu^{-1} x_{2}^{\sharp}-x_{0}x_{1})^{\sharp}-(\mu x_{1}^{\sharp}-x_{2}x_{0})(x_{0}^{\sharp}-x_{1}x_{2}) \nonumber \\ &= \mu((\mu^{-1} x_{2}^{\sharp})^{\sharp}-(\mu^{-1} x_{2}^{\sharp})\sharp (x_{0}x_{1})+(x_{0}x_{1})^{\sharp}) \nonumber \\ & \hspace{0.3cm} -\mu x_{1}^{\sharp}x_{0}^{\sharp} + \mu x_{1}^{\sharp} x_{1}x_{2}+x_{2}x_{0}x_{0}^{\sharp}-x_{2}x_{0}x_{1}x_{2}. \label{a2 equation adjoint}
\end{align}
 (i) Here $ x_{0} = x_{2} = 0 $, therefore (\ref{a0 equation adjoint}) implies $ a_{0} = 0 $ 
 and (\ref{a1 equation adjoint}) gives that
$
a_{1} = \mu^{-1}(\mu x_{1}^{\sharp})^{\sharp} = \mu^{-1}N_{A}(x_{1})x_{1} \mu^{\sharp} = 0 = N(x)x_{1}.
$
Finally, (\ref{a2 equation adjoint}) gives that $ a_{2} = 0 $ as required.
\\ (ii) Since $ x_{2} = 0 $, $ x_{1}\mu = \mu x_{1} $ and $ N_{A}(x_{1}) = 0 $, we find by (\ref{a0 equation adjoint}) that
$
a_{0} = (x_{0}^{\sharp})^{\sharp} +x_{0}\mu x_{1}x_{1}^{\sharp} = N_{A}(x_{0})x_{0}+x_{0}\mu N_{A}(x_{1}) = N(x)x_{0}.
$
Now (\ref{a1 equation adjoint}) gives that
$
a_{1} = \mu^{-1}(\mu x_{1}^{\sharp})^{\sharp} + x_{0}^{\sharp}x_{0}x_{1} = \mu^{-1}N_{A}(x_{1})x_{1}\mu^{\sharp} + N_{A}(x_{0})x_{1} = N(x)x_{1},
$
and by (\ref{a2 equation adjoint}) we obtain
$
a_{2} = \mu(x_{0}x_{1})^{\sharp} - \mu x_{1}^{\sharp}x_{0}^{\sharp} = 0  = N(x)0.
$
\\ (iii) Since $ x_{1} = 0 $ and $ N_{A}(x_{2}) = 0 $, (\ref{a0 equation adjoint}) yields
$
a_{0} = (x_{0}^{\sharp})^{\sharp}+\mu^{-1}x_{2}^{\sharp}x_{2}x_{0} = N_{A}(x_{0})x_{0}+\mu^{-1}N_{A}(x_{2})x_{0} = N(x)x_{0}.$
Now since $ x_{0}\mu^{-1} = \mu^{-1}x_{0} $, we have that $ x_{0}^{\sharp}\mu^{-1} = \mu^{-1}x_{0}^{\sharp} $, so (\ref{a1 equation adjoint}) gives that
$
a_{1} = \mu^{-1}(x_{2}x_{0})^{\sharp} - \mu^{-1}x_{0}^{\sharp}x_{2}^{\sharp} = 0 = N(x)0.
$
Finally, (\ref{a2 equation adjoint}) gives that
$
a_{2} = \mu(\mu^{-1}x_{2}^{\sharp})^{\sharp}+x_{2}x_{0}x_{0}^{\sharp} = \mu N_{A}(x_{2})x_{2}(\mu^{-1})^{\sharp}+N_{A}(x_{0})x_{2} = N(x)x_{2}.$
\\
 (iv) Since $ x_{2} = 0 $, $ x_{0}\mu = \mu x_{0} $ and $ x_{1}\mu = \mu x_{1} $, (\ref{a0 equation adjoint}) yields
$
a_{0} = (x_{0}^{\sharp})^{\sharp}+\mu x_{0}x_{1}x_{1}^{\sharp} = N_{A}(x_{0})x_{0}+\mu N_{A}(x_{1})x_{0} = N(x)x_{0}.
$
Now (\ref{a2 equation adjoint}) gives that
$a_{2} = \mu (x_{0}x_{1})^{\sharp}-\mu x_{1}^{\sharp}x_{0}^{\sharp} = 0 $. 
Finally, (\ref{a1 equation adjoint}) gives that
$a_{1} = \mu^{-1}(\mu x_{1}^{\sharp})^{\sharp}+x_{0}^{\sharp}x_{0}x_{1} = \mu^{-1}N_{A}(x_{1})x_{1}\mu^{\sharp}+N_{A}(x_{0})x_{1}.$
Thus $ a_{1} = N(x)x_{1} $ if and only if $ \mu^{-1}N_{A}(x_{1})x_{1}\mu^{\sharp}+N_{A}(x_{0})x_{1} = N(x)x_{1} $, which occurs if and only if $ \mu^{-1}N_{A}(x_{1})x_{1}\mu^{\sharp} = \mu N_{A}(x_{1})x_{1} $. Since $ N_{A}(x_{1}) \neq 0 $ and $ x_{1}\mu^{\sharp} = \mu^{\sharp}x_{1} $, this occurs if and only if $ \mu^{\sharp} x_{1} = \mu^{2}x_{1} $. Finally, $ N_{A}(x_{1}) \neq 0 $ implies that $ x_{1} $ is invertible, so $ \mu^{\sharp}x_{1} = \mu^{2}x_{1} $ if and only if $ N_{A}(\mu) = \mu \mu^{\sharp} = \mu^{3} $.
\\ (v) Since $ x_{1} = 0 $, (\ref{a0 equation adjoint}) yields
$
a_{0} = (x_{0}^{\sharp})^{\sharp} +\mu^{-1}x_{2}^{\sharp}x_{2}x_{0} = N(x)x_{0}.
$
Also, since $ x_{0} $ commutes with $ \mu $, $ x_{0}^{\sharp} $ commutes with $ \mu $. So $ x_{0}^{\sharp}\mu^{-1} = \mu^{-1}x_{0}^{\sharp} $. Hence (\ref{a1 equation adjoint}) gives that
$a_{1} = \mu^{-1}(x_{2}x_{0})^{\sharp}-\mu^{-1}x_{0}^{\sharp}x_{2}^{\sharp} = 0 = N(x)0.$
Finally, (\ref{a2 equation adjoint}) yields
$
a_{2} = \mu(\mu^{-1}x_{2}^{\sharp})^{\sharp}+x_{2}x_{0}x_{0}^{\sharp} = \mu N_{A}(x_{2})x_{2}(\mu^{-1})^{\sharp}+N_{A}(x_{0})x_{2}.
$
 Thus $ a_{2} = N(x)x_{2} $ if and only if $ \mu N_{A}(x_{2})x_{2}(\mu^{-1})^{\sharp}+N_{A}(x_{0})x_{2} = N(x)x_{2} $, which occurs if and only if $ \mu N_{A}(x_{2})x_{2}(\mu^{-1})^{\sharp} = \mu^{-1}N_{A}(x_{2})x_{2} $. Since $ N_{A}(x_{2}) \neq 0 $ and $ x_{2}(\mu^{-1})^{\sharp} = (\mu^{-1})^{\sharp}x_{2} $, this occurs if and only if $ (\mu^{-1})^{\sharp}x_{2} = \mu^{-2}x_{2} $. Finally, $ N_{A}(x_{2}) \neq 0 $ implies that $ x_{2} $ is invertible, so $ (\mu^{-1})^{\sharp}x_{2} = \mu^{-2}x_{2} $ if and only if $ N_{A}(\mu^{-1}) = \mu^{-1}(\mu^{-1})^{\sharp} = \mu^{-3} $. This is equivalent to $ N_{A}(\mu) = \mu^{3} $.
\end{proof}

\begin{proposition} \label{adjoint in J(A,muA)}
 Let $A$ be a central simple algebra over $F$. Then  $(x^\sharp)^\sharp=N(x)$ for all $ x  \in J(A,\mu) $ if and only if $ \mu \in F^{\times} $.
\end{proposition}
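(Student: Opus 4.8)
The plan is to prove the two implications separately. The direction ``$\mu\in F^\times\Rightarrow$'' is immediate: for $\mu\in F^\times$ the algebra $J(A,\mu)$ is the classical first Tits construction of Section~\ref{The first Tits construction}, so $(N,\sharp,1)$ is a genuine cubic form with adjoint and base point, and the adjoint identity $(x^\sharp)^\sharp=N(x)x$ then holds for all $x\in J(A,\mu)$ \cite[Chapter~C.4]{McC}. For the converse I would argue by contradiction: assume $(x^\sharp)^\sharp=N(x)x$ for all $x\in J(A,\mu)$ but $\mu\notin F$. Since $A$ is central simple we have $C(A)=F$, so $\mu\notin F$ means precisely that $1$ and $\mu$ are linearly independent over $F$, and the goal is to contradict this.

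The key step is to feed the single element $x=(0,1,0)$ into the hypothesis, since every term is then easy to compute. Using $0^\sharp=0$ and $1^\sharp=1$ in $A$, the defining formula for the adjoint on $J(A,\mu)$ gives $x^\sharp=(0,0,\mu)$, and applying it once more together with $\mu\mu^\sharp=N_A(\mu)1$ yields $(x^\sharp)^\sharp=(0,\mu^{-1}\mu^\sharp,0)=(0,N_A(\mu)\mu^{-2},0)$. On the other side $N(x)=\mu N_A(1)=\mu$, and I read $N(x)x$ as the product in $J(A,\mu)$ of $x$ with the image $(\mu,0,0)$ of $\mu$ under the canonical embedding $A\hookrightarrow A_0$ --- the unique reading that reduces to the usual scalar multiple when $\mu\in F^\times$ --- which works out to $(0,\overline{\mu},0)$ with $\overline{\mu}=\tfrac12(T_A(\mu)1-\mu)$. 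Comparing middle components and clearing denominators gives $2N_A(\mu)1=T_A(\mu)\mu^2-\mu^3$; substituting the degree three identity $\mu^3=T_A(\mu)\mu^2-S_A(\mu)\mu+N_A(\mu)1$ makes the $T_A(\mu)\mu^2$ terms cancel and leaves $3N_A(\mu)1=S_A(\mu)\mu$. Linear independence of $1$ and $\mu$ forces $S_A(\mu)=0$ and $N_A(\mu)=0$; but $N_A(\mu)=0$ contradicts $\mu\in A^\times$, so $\mu\in F$, hence $\mu\in F^\times$.

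The one point requiring care is the meaning of $N(x)x$ once $N(x)$ is allowed to lie in $A\setminus F$; I would spell out the reading above and note it is the only one compatible with the classical adjoint identity. If one instead prefers a componentwise reading (left multiplication of each component by $N(x)$), then $x=(0,1,0)$ only produces the weaker relation $N_A(\mu)1=\mu^3$ --- which is also what cases (iv) and (v) of Lemma~\ref{adjoint in J(A,mu)} give --- and one must additionally apply the hypothesis to $x=(0,b,0)$ for some unit $b\in A^\times$ with $b\mu\neq\mu b$. Such a $b$ exists because $A$ is central simple of degree three, so the proper subspace $C_A(\mu)$ and the cubic hypersurface $N_A^{-1}(0)$ cannot together cover $A$; the same chain of computations, now using $(xy)^\sharp=y^\sharp x^\sharp$ and $(b^\sharp)^\sharp=N_A(b)b$, then forces $b\mu=\mu b$, the desired contradiction. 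In both routes the only structural inputs are the degree three identity in $A$, the relation $\mu\mu^\sharp=N_A(\mu)1$, and $C(A)=F$, and the computations with $\sharp$ are all short; the main (mild) obstacle is simply fixing the correct interpretation of the identity in the generalized setting.
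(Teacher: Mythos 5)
Your ``if'' direction matches the paper, but the ``only if'' direction has two problems. The first is the meaning of $N(x)x$. Throughout the paper (see the proof of Lemma \ref{adjoint in J(A,mu)}, where the identity is verified in the form $a_i=N(x)x_i$ for $(x^\sharp)^\sharp=(a_0,a_1,a_2)$), $N(x)x$ denotes the triple obtained by left-multiplying each component $x_i$ by the element $N(x)\in A$; it is \emph{not} the product of $(N(x),0,0)$ with $x$ inside $J(A,\mu)$. Both readings collapse to ordinary scalar multiplication when $N(x)\in F$, so yours is not ``the unique reading that reduces to the usual scalar multiple,'' and your primary computation proves a statement with a different hypothesis rather than the proposition as stated.

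Your fallback for the componentwise reading then contains a concrete error. From $x=(0,1,0)$ you correctly obtain $\mu^\sharp=\mu^2$, i.e.\ $N_A(\mu)=\mu^3$. But for $x=(0,b,0)$ with $b\in A^\times$ one has $x^\sharp=(0,0,\mu b^\sharp)$ and $(x^\sharp)^\sharp=(0,N_A(b)\mu^{-1}b\mu^\sharp,0)$, so equating middle components with $(0,\mu N_A(b)b,0)$ gives $b\mu^\sharp=\mu^2b$, hence $b\mu^2=\mu^2b$: commutation with $\mu^2$, not with $\mu$. This is repairable (units span $A$, so $\mu^2\in C(A)=F^\times$, and then $\mu=N_A(\mu)(\mu^2)^{-1}\in F$ since $N_A(\mu)\in F$), but the assertion that the computation ``forces $b\mu=\mu b$'' is false as written. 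The paper sidesteps all of this by taking $x=(x_0,1,0)$ for arbitrary $x_0\in A$ and comparing \emph{first} components: $x^\sharp=(x_0^\sharp,-x_0,\mu)$, so the first component of $(x^\sharp)^\sharp$ is $(x_0^\sharp)^\sharp+x_0\mu=N_A(x_0)x_0+x_0\mu$, while that of $N(x)x$ is $(N_A(x_0)+\mu)x_0=N_A(x_0)x_0+\mu x_0$; hence $x_0\mu=\mu x_0$ for every $x_0\in A$ and $\mu\in C(A)=F^\times$ in one step. You should either adopt that specialization or complete the $\mu^2$-argument above.
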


\begin{proof}
 Let $ \mu \in F^{\times}$ then by Lemma \ref{adjoint in J(A,mu)}, the adjoint identity holds for all $ x \in J(A,\mu) $.
 Suppose now that the adjoint identity holds for all  $x\in J(A,\mu) $. Let $ x = (x_{0},1,0) \in J(A,\mu) $ for some $ x_{0} \in A $. Then $ x^{\sharp } = (x_{0}^{\sharp },-x_{0},\mu) $ and so
\begin{equation} \label{adjoint proof in generalised 1}
(x^{\sharp })^{\sharp } = ((x_{0}^{\sharp })^{\sharp }+x_{0}\mu,\mu^{-1}\mu^{\sharp }+(x_{0}^{\sharp })^{\sharp }x_{0},0).
\end{equation}
Also, $ N(x) = N_{A}(x_{0})1+\mu $. Since the adjoint identity holds by assumption, we see that by using (\ref{adjoint proof in generalised 1}),
\begin{equation} \label{adjoint proof needK}
((x_{0}^{\sharp })^{\sharp }+x_{0}\mu,\mu^{-1}\mu^{\sharp }+(x_{0}^{\sharp })^{\sharp }x_{0},0) = (N_{A}(x_{0})x_{0}+\mu x_{0},N_{A}(x_{0})+\mu,0).
\end{equation}
 We know that $ (x_{0}^{\sharp })^{\sharp } = N_{A}(x_{0})x_{0} $ for all $x_0\in A$ by Lemma \ref{adjoint identity lemma}, and so by comparing the first components of (\ref{adjoint proof needK}), we find that $ x_{0}\mu = \mu x_{0} $ for all $x_0\in A$. Hence $ \mu \in C(A) $, and since \(A\) is a central simple algebra   by assumption, $ \mu \in F^{\times} $.
\end{proof}

 If $\mu\in F^\times$ then the norm $N$ \emph{permits Jordan composition}, i.e.
$N(U_xy) = N_A(x)^2 N(y)$ for all $x,y \in  J(A, a )$. The following result is a corrected version of \cite[Theorem 5.2.5]{St2}, and a weak generalization of the Jordan composition for $\mu\in A^\times \setminus F$:

\begin{theorem}
Let $ x = (x_{0},0,0)\in A $, $ y = (y_{0},y_{1},y_{2}) \in J(A,\mu) $  and
suppose that one of the following holds:
\\ (i) $T_{A}(y_{0}y_{1}y_{2})=T_{A}(N_{A}(y_{0})x_{0}^{\sharp }y_{1}y_{2}x_{0}^{\sharp }).$
\\ (ii) $y_{0}y_{1}y_{2}=N_{A}(y_{0})x_{0}^{\sharp }y_{1}y_{2}x_{0}^{\sharp }$.
\\ (iii)  $ y_{i} = 0 $ for some $ i = 0,1,2 $.
\\  Then $ N(U_{x}(y)) = N(x)^{2}N(y) $.
\end{theorem}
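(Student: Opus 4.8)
The plan is to compute $U_x(y)$ explicitly and then evaluate the cubic norm $N$ on it. For $x=(x_0,0,0)$ one has $x^{\sharp}=(x_0^{\sharp},0,0)$ and $T(x,y)=T_A(x_0y_0)$, so the formula for the linearised sharp map on $J(A,\mu)$ specialises to $x^{\sharp}\sharp y=(x_0^{\sharp}\sharp y_0,\,-x_0^{\sharp}y_1,\,-y_2x_0^{\sharp})$, and hence $U_x(y)=\bigl(T_A(x_0y_0)x_0-x_0^{\sharp}\sharp y_0,\;x_0^{\sharp}y_1,\;y_2x_0^{\sharp}\bigr)$. By (\ref{McCrimmon xyx equation}) applied inside $A$ together with the trace-product formula (\ref{Trace-product formula}) we have $x_0^{\sharp}\sharp y_0=T_A(x_0y_0)x_0-x_0y_0x_0$, so the first entry collapses and
\[U_x(y)=(x_0y_0x_0,\;x_0^{\sharp}y_1,\;y_2x_0^{\sharp}).\]

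Next I would substitute this triple into the definition of $N$. Since the norm $N_A$ of the (separable, degree-three, associative) algebra $A$ is multiplicative and $N_A(x_0^{\sharp})=N_A(x_0)^{2}$ by (\ref{normsharp}), the three norm contributions become $N_A(x_0y_0x_0)=N_A(x_0)^{2}N_A(y_0)$, $N_A(x_0^{\sharp}y_1)=N_A(x_0)^{2}N_A(y_1)$ and $N_A(y_2x_0^{\sharp})=N_A(x_0)^{2}N_A(y_2)$. For the mixed trilinear trace term, associativity in $A$ and $x_0x_0^{\sharp}=x_0^{\sharp}x_0=N_A(x_0)1$ give $(x_0y_0x_0)(x_0^{\sharp}y_1)(y_2x_0^{\sharp})=N_A(x_0)\,x_0y_0y_1y_2x_0^{\sharp}$, and then the cyclicity of $T_A$ from (\ref{Tr(xy) = Tr(yx)}) yields $T_A\bigl((x_0y_0x_0)(x_0^{\sharp}y_1)(y_2x_0^{\sharp})\bigr)=N_A(x_0)^{2}T_A(y_0y_1y_2)$. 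This last trilinear trace identity is the only step that takes any work, and it is where the three hypotheses intervene: under (iii) both sides are plainly $0$, and (ii) gives (i) on applying $T_A$ to the stated equality — though the cyclic evaluation just indicated in fact establishes it for every $y$ once $x\in A_0$. The care needed is pure bookkeeping around the non-commutativity of $A$: $x_0^{\sharp}$ may only be cycled around the product, not moved past a $y_i$, after which the inner products $x_0x_0^{\sharp}$, $x_0^{\sharp}x_0$ collapse to the central scalar $N_A(x_0)1$ and the $\mu$ and $\mu^{-1}$ drop out.

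Finally I would assemble the pieces: since $N_A(x_0)^{2}\in F$ lies in the centre of $A$, collecting terms gives
\[N(U_x(y))=N_A(x_0)^{2}\bigl(N_A(y_0)1+\mu N_A(y_1)+\mu^{-1}N_A(y_2)-T_A(y_0y_1y_2)1\bigr)=N_A(x_0)^{2}N(y),\]
and because $N(x)=N((x_0,0,0))=N_A(x_0)1$ we have $N(x)^{2}=N_A(x_0)^{2}1$, whence $N(U_x(y))=N(x)^{2}N(y)$, as required.
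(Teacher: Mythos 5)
Your argument is correct, and it agrees with the paper's proof up to and including the computation $U_x(y)=(x_0y_0x_0,\;x_0^{\sharp}y_1,\;y_2x_0^{\sharp})$ and the evaluation of the three pure norm terms via $N_A(x_0^{\sharp})=N_A(x_0)^2$ and multiplicativity of $N_A$. The two arguments diverge at the trilinear trace term, and there your route is genuinely different and in fact sharper. You evaluate $T_A\bigl((x_0y_0x_0)(x_0^{\sharp}y_1)(y_2x_0^{\sharp})\bigr)$ head-on: the inner factor $x_0x_0^{\sharp}=N_A(x_0)1$ collapses the product to $N_A(x_0)\,x_0y_0y_1y_2x_0^{\sharp}$, and cycling $x_0^{\sharp}$ to the front turns this into $N_A(x_0)^2T_A(y_0y_1y_2)$; every summand of $N(U_x(y))$ then carries the central factor $N_A(x_0)^2$, so $N(U_x(y))=N(x)^2N(y)$ holds for \emph{all} $y\in J(A,\mu)$ and hypotheses (i)--(iii) are never invoked. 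The paper instead passes from $T_A(U_{x_0}(y_0)x_0^{\sharp}y_1y_2x_0^{\sharp})$ to $N_A(x_0)^2T_A(N_A(y_0)x_0^{\sharp}y_1y_2x_0^{\sharp})$ --- in effect substituting the norm of $U_{x_0}(y_0)$ for the element itself inside the trace, which is not a valid identity (already for $A$ cubic \'etale, $y_0=y_1=y_2=1$ and a generic $x_0$ of norm $1$, the two sides are $3$ and $T_A((x_0^{\sharp})^2)$ respectively) --- and is then left with the residue $T_A(y_0y_1y_2)-T_A(N_A(y_0)x_0^{\sharp}y_1y_2x_0^{\sharp})$, whose vanishing is exactly hypothesis (i), with (ii) and (iii) as sufficient conditions for it. So your proof establishes the theorem as stated and shows in passing that its hypotheses are superfluous: the weak Jordan composition law holds unconditionally for $x\in A_0$.
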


\begin{proof}
Using the definitions, we see that $ T(x,y) = T_{A}(x_{0}y_{0}) $ and $ x^{\sharp }\sharp  y =  (x_{0}^{\sharp }\sharp  y_{0},-x_{0}^{\sharp }y_{1},-y_{2}x_{0}^{\sharp }) $. So
$U_{x}(y) = T(x,y)x-x^{\sharp }\sharp  y = (U_{x_{0}}(y_{0}),x_{0}^{\sharp }y_{1},y_{2}x_{0}^{\sharp }).$
This yields
\begin{align*}
N(U_{x}(y))
&= N_{A}(U_{x_{0}}(y_{0}))1+\mu N_{A}(x_{0}^{\sharp }y_{1})+\mu^{-1}N_{A}(y_{2}x_{0}^{\sharp })-T_{A}(U_{x_{0}}(y_{0})x_{0}^{\sharp }y_{1}y_{2}x_{0}^{\sharp })1 \\
&= N_{A}(x_{0})^{2}(N_{A}(y_{0})1+\mu N_{A}(y_{1})+\mu^{-1}N_{A}(y_{2})-T_{A}(N_{A}(y_{0})x_{0}^{\sharp }y_{1}y_{2}x_{0}^{\sharp })1) \\
&= N(x)^{2}(N(y)+T_{A}(y_{0}y_{1}y_{2})1-T_{A}(N_{A}(y_{0})x_{0}^{\sharp }y_{1}y_{2}x_{0}^{\sharp })1),
\end{align*}
where in the second equality we have used the fact that $ N_{A}(x_{0}^{\sharp }) = N_{A}(x_{0})^{2} $, and that $ N_{A}(U_{x_{0}}(y_{0})) = N_{A}(x_{0})^{2}N_{A}(y_{0}) $. Therefore, $ N(U_{x}(y)) = N(x)^{2}N(y) $, if and only if
$T_{A}(y_{0}y_{1}y_{2})1=T_{A}(N_{A}(y_{0})x_{0}^{\sharp }y_{1}y_{2}x_{0}^{\sharp }).$
\\ (ii) and (iii) are examples where this is the case.
\end{proof}

 \begin{remark} Let $ f: J(A,\mu) \rightarrow J(A,\mu) $ be an automorphism. Then
\begin{equation} \label{automorphism linear}
f((x_{0},x_{1},x_{2})) = f((x_{0},0,0))+f((0,x_{1},0))+f((0,0,x_{2})).
\end{equation}
Now for each $ x \in A $, we have
$f((0,\overline{x},0)) = f((x,0,0))f((0,1,0), $ and $ f((0,0,\overline{x})) = f((x,0,0))f((0,0,1))$.
On the other hand, by using the definition of $ \overline{x} $,
\begin{align*}
f((0,\overline{x},0)) &= \frac{1}{2}\operatorname{Tr}_{A}(x)f((0,1,0))-\frac{1}{2}f((0,x,0)), \\ f((0,0,\overline{x})) &= \frac{1}{2}\operatorname{Tr}_{A}(x)f((0,0,1))-\frac{1}{2}f((0,0,x)).
\end{align*}
Hence
\begin{align}
f((0,x,0)) &= f((0,1,0))(\operatorname{Tr}_{A}(x)-2f((x,0,0))), \label{2 overline} \\ f((0,0,x)) &= f((0,0,1))(\operatorname{Tr}_{A}(x)-2f((x,0,0))). \label{3 overline}
\end{align}
So by (\ref{automorphism linear}), (\ref{2 overline}) and (\ref{3 overline}), we see that any automorphism of $ J(A,\mu) $ is determined by its restriction on $ A^{+} $, and its value on $ (0,1,0) $ and $ (0,0,1) $.
Let $f:J(A,\mu)\rightarrow J(A,\mu)$ be an automorphism that fixes $A^+$ then  $f|_{A^+}=\tau$ is either an automorphism or an anti-automorphism of $A$. Moreover, clearly $f((1,0,0))=(1,0,0)$, so
$$f((x_0,x_1,x_2))=(\tau(x_0),0,0)+(\tau(x_1),0,0)f((0,1,0))+(\tau(x_2),0,0)f((0,0,1)).$$
Calculation to try gain some deeper understanding on the automorphisms are tedious and did not lead us anywhere so far.
\end{remark}

\section{The nine-dimensional nonassociative algebras $J(K,\mu)$} \label{sec:example}

Let $K/F$ be a separable cubic field extension  with ${\rm Gal}(K/F)=\langle \sigma \rangle$,  norm $N_K$, and trace $T_K$.
For all $x_0 \in K$, we have
$x_0^\sharp = \sigma(x_0) \sigma^2(x_0)$ and $\overline{x_0}=\frac{1}{2}(\sigma(x_0)+\sigma^2(x_0))$. Assume $\mu\in K^\times$.

Let us compare the first Tits construction $J(K,\mu)$ with the algebra $D^+$ for a (perhaps nonassociative) cyclic algebra $D=(K/F, \sigma, \mu)$ over $F$ of degree three. Consider $D$ as a left $K$-vector space with basis $\{1,z, z^2\}.$
 Write $R_x$ for the matrix of right multiplication by $x = x_0 + x_1z + x_2z^2,$ $ x_i \in K$, with respect to the basis $\{1, z, z^2\}$, then the cubic map  $N_D: D \rightarrow K$, $N_D(x) = {\rm det}(R_x)$ (which is the reduced norm of the central simple algebra $D$ if $\mu\in F^\times$), is given by
 $$N_D(x) = N_K(x_0) + \mu N_K(x_1) +\mu^2 N_K(x_2) - \mu T_{K}(x_0 \sigma(x_1) \sigma^2(x_2)).$$
 If $N_D$ is anisotropic then $D$ is a division algebra over $F$.
If $\mu\in K\setminus F$ we  get $N_D(lx) = N_K(l)N_D(x)$  for all $x \in D$, $l \in K$ \cite[Proposition 4.2.2, 4.2.3]{St2}.

  On the other hand, $J(K, \mu)$  is a nine-dimensional nonassociative unital algebra over $F$ with multiplication
$$x  y = (x_0 \cdot y_0 + \overline{x_1y_2} + \overline{x_2y_1}, \overline{x_0}y_1 + \overline{y_0}x_1 +  \mu^{-1}(x_2 \times y_2),
 \overline{x_0}y_2 + \overline{y_0}x_2 +  \mu (x_1 \times y_1))$$
 for $x = (x_0, x_1, x_2), y = (y_0, y_1, y_2) \in J(K, \mu )$,
 cubic norm map
\[N((x_0, x_1, x_2)) = N_K(x_0) +  \mu  N_K(x_1) +  \mu^{-1} N_K(x_2) - T_K(x_0x_1x_2),\]
 and  trace $T(x) = T_K(x_{0})$. Moreover, we have
 \[x^\sharp = (\sigma(x_0) \sigma^2(x_0) - x_1x_2,  \mu^{-1}\sigma(x_2) \sigma^2(x_2) - x_0x_1,  \mu  \sigma(x_1) \sigma^2(x_1) - x_2x_0).\]
If $ \mu \in F^{\times} $ then  $ D = (K/F,\sigma,\mu) $ is an associative cyclic algebra over $F$ of degree three and
 $J(K, \mu)\cong D^+$ is a special cubic Jordan algebra.
 It is well known that the isomorphism $G: D^+=(K/F,\sigma,\mu)^+ \rightarrow J(K,\mu)$ is given by
 $$   x_{0}+x_{1}z+x_{2}z^{2} \mapsto (x_{0},\sigma(x_{1}),\mu\sigma^{2}(x_{2})).$$
 However, if $ \mu \in K \setminus F$, $G$ is not an algebra isomorphism between $ (K/F,\sigma,\mu)^{+} $ and $ J(K,\mu) $, where now $ (K/F,\sigma,\mu) $ is a nonassociative cyclic algebra, since $ \sigma(\mu) \neq \mu $.
However, for $ \mu \in K \setminus F$, the map $G: D^+ \rightarrow J(K,\mu)$ still yields an isometry of norms,
 since
 $$N((x_0, \sigma(x_{1}),\mu\sigma^{2}(x_{2})) = N_K(x_0) +  \mu  N_K(x_1) +  \mu^{-1} N_K(x_2) - \mu T_K(x_0\sigma(x_1)\sigma(x_2))=
N_D((x_0, x_1, x_2) ),$$
 and hence the norms of the two nonisomorphic algebras $ D^+=(K/F,\sigma,\mu)^+$ and $ J(K,\mu)$ are isometric.
\\\\
 \emph{Statements and Declarations}
  This paper was written while the second author was a visitor at the University of Ottawa. She acknowledges support from  the Centre de Recherches Math\'ematiques for giving a colloquium talk,
  and from Monica Nevins' NSERC Discovery Grant RGPIN-2020-05020 (the author herself does not hold any grant). She would like to thank the Department of Mathematics and Statistics for its hospitality.

\end{document}